\theoremstyle{plain}
\newtheorem{thm}{Theorem}[section]
\newtheorem{prob}[thm]{Problem}
\newtheorem{lem}[thm]{Lemma}
\newtheorem{claim}[thm]{Claim}
\newtheorem*{thm-eight}{Theorem 8.1}
\theoremstyle{definition}
\newtheorem{ex}[thm]{Example}
\newtheorem{defn}[thm]{Definition}
\newtheorem*{problem}{Problem 3.6(D)}
\theoremstyle{remark}
\newtheorem{rem}[thm]{Remark}
\newcommand{\N}{\mathbb{N}}
\newcommand{\Z}{\mathbb{Z}}
\newcommand{\R}{\mathbb{R}}
\newcommand{\calL}{\ensuremath{{\mathcal L}}}
\DeclareMathOperator{\nbhd}{Nbhd}
\newcommand{\sign}{\mathop{\mathrm{sign}}\nolimits}
\def\smallcoprod{\raise.3ex\hbox{$\,\scriptstyle\coprod\,$}}
\def\eop#1{\hfill\break\rightline{$\square$\ #1}}
\numberwithin{equation}{section}
\title[Infinitely many knots admitting the same integer surgery]
{Infinitely many knots admitting the same integer surgery and a 4-dimensional extension}
\author[Abe]{Tetsuya Abe} 
\address{Department of Mathematics,
Tokyo Institute of Technology,
2-12-1 Ookayama, Meguro-ku, 
Tokyo 152-8551, Japan}
\email{abe.t.av@m.titech.ac.jp}
\author[Jong]{In Dae Jong}
\address{Department of Mathematics, 
Kinki University, 
3-4-1 Kowakae, Higashiosaka City,
Osaka 577-0818, Japan}
\email{jong@math.kindai.ac.jp}
\author[Luecke]{John Luecke}
\address{University of Texas at Austin}
\email{luecke@math.utexas.edu}
\author[Osoinach]{John Osoinach}
\address{University of Dallas}
\email{josoinach@udallas.edu}
\subjclass[2010]{57M25, 57M27, 57R65}
\keywords{annulus twist; Dehn surgery; Kirby calculus; knot; $2$-handle addition, 
$3$-manifolds, smooth $4$-manifolds}
\begin{document}

\maketitle

\begin{abstract}
We prove that for any integer $n$ there exist infinitely many different knots in $S^3$  such that $n$-surgery on those knots yields the same $3$-manifold. 
In particular, when $|n|=1$ homology spheres arise from these surgeries. 
This answers Problem 3.6(D) on the Kirby problem list. 
We construct two families of examples, 
the first by a method of twisting along an annulus 
and the second by a generalization of this procedure. 
The latter family also solves a stronger version of Problem 3.6(D), 
that for any integer $n$, there exist infinitely many mutually distinct knots 
such that 2-handle addition along each with framing $n$ yields the same $4$-manifold. 
\end{abstract}

\section{Introduction}\label{sec:intro} 

Dehn surgery on knots is a long-standing technique for the construction of 3-manifolds.  While well-known theorems of Lickorish~\cite{lickorish} and Wallace~\cite{wallace} state that every orientable 3-manifold can be obtained by Dehn surgery on some link in $S^3,$ this representation is far from unique.  In particular, in the Kirby problem list~\cite{Kirby}, Clark asks the following: 

\begin{problem}
Fix an integer $n$. 
Is there a homology 3-sphere (or any 3-manifold) which can 
be obtained by $n$-surgery on an infinite number of distinct knots?
\end{problem}

\noindent 
In \cite{Osoinach}, the parenthetical version of this question was answered affirmatively  by constructing knots using the method of twisting along an annulus. 
This method was subsequently developed in \cite{teragaito} to construct infinitely many knots yielding a small Seifert-fibered manifold. In \cite{Osoinach}, the surgery slope is $0$, and in \cite{kouno},\cite{teragaito} and \cite{bgl} the surgery slopes are 
multiples of $4$. 

In Section~\ref{sec:LO}, we use the annular twist construction to create, for each integer $n$, an infinite family of distinct knots in $S^3$ such that $n$-surgery on each knot in the collection yields the same manifold (Theorem~\ref{LO-mainthm}). 
When $|n|=1$, the resulting manifold is a homology sphere thereby 
answering affirmatively Problem 3.6(D) above. 
The members of each infinite
family are distinguished by their hyperbolic volume. 
Alternatively, 
at least when $n \neq 0,$ the knots in a family are shown to be different by 
proving that the bridge numbers tend to infinity as the number of twists along the annulus increases. 

In \cite{AJOT}, 
a $4$-dimensional extension of Problem 3.6(D) was proposed as follows: 

\begin{prob}\label{prob:Kirby}
Let $n$ be an integer. 
Find infinitely many mutually distinct knots $K_1, K_2, \dots$ 
such that $X_{K_i}(n) \approx X_{K_j}(n) $ for each $i, j \in \N$. 
\end{prob}
\noindent 
Here $X_{K}(n)$ denotes the smooth $4$-manifold obtained from 
the $4$-ball $B^4$ by attaching a $2$-handle along $K$ with framing $n$, 
and the symbol $\approx$ stands for a diffeomorphism. 

In Section~\ref{sec:AJ}, 
we generalize the annulus twist method in a somewhat surprising way 
to produce a different family of knots answering Problem 3.6(D). 
Furthermore, this family solves Problem~\ref{prob:Kirby} affirmatively as follows. 

\begin{thm}\label{thm:main}
For every $n \in \Z$, 
there exist distinct knots $J_0, J_1, J_2, \dots$ such that
\[ X_{J_0}(n) \approx X_{J_1}(n) \approx X_{J_2}(n) \approx \cdots \, . \]
\end{thm}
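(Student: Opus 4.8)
The plan is to realize each $J_m$ as the result of applying an annulus twist $m$ times to a fixed knot $J_0$, and then to show that this operation, performed in the presence of a canceling handle pair, is a diffeomorphism of the corresponding $2$-handlebodies rather than merely a homeomorphism of their boundaries. Concretely, I would fix $n$ and choose a knot $J_0$ presented relative to an embedded annulus $A \subset S^3$ (an ``annulus presentation''), defining $J_m$ as the image of $J_0$ under $m$ full twists along $A$. The $3$-dimensional half of the statement---that $n$-surgery on every $J_m$ yields the same manifold---follows from the annulus-twist analysis developed in Section~\ref{sec:AJ}, since a twist along $A$ is realized by $\mp 1/m$-surgery on the two boundary curves of $A$, and because these curves cobound $A$ the surgery returns $S^3$ while carrying $J_0$ to $J_m$.

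The heart of the argument is to upgrade this to the level of $4$-manifolds. I would begin from the standard Kirby diagram of $X_{J_0}(n)$, namely $J_0$ with framing $n$ on a single $0$-handle, and enlarge it by introducing a canceling pair of handles encoding the annulus $A$: a $1$-handle (a dotted unknot running through $A$) together with a $0$-framed $2$-handle, chosen so that the pair cancels and hence does not alter $X_{J_0}(n)$. The twisting along $A$ is then encoded as a sequence of handle slides of the $2$-handle along $J_0$ over this auxiliary structure; performing the slides $m$ times converts $J_0$ into $J_m$ while keeping the auxiliary pair canceling. Cancelling that pair at the end returns the diagram of $J_m$ with framing $n$, that is, $X_{J_m}(n)$. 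Since handle slides and the creation and cancellation of a canceling handle pair are diffeomorphisms of the underlying $4$-manifold, this chain of moves yields $X_{J_0}(n) \approx X_{J_m}(n)$, and composing over $m$ gives the asserted sequence of diffeomorphisms.

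The main obstacle is precisely the point where the construction departs from the classical annulus twist. In the $3$-dimensional picture the twist is produced by $\pm 1/m$-framed surgery curves, and a Rolfsen twist on such a curve is \emph{not} a $4$-dimensional handle move---it changes the $2$-handlebody, not merely its boundary. The ``surprising'' part of the generalization is to arrange the annulus $A$ and the auxiliary canceling pair so that the entire twist is realized by integer-framed handle slides, together with passage over a genuine $1$-handle, so that it lifts to a diffeomorphism of $X_{J}(n)$. Verifying this requires tracking, slide by slide, that the framings and linking numbers remain consistent and that the auxiliary $1$-handle/$2$-handle pair stays canceling throughout; this bookkeeping---rather than any deep topology---is where the real work lies, and it is what forces the conclusion to hold for the $4$-manifold and for every integer $n$ at once.

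Finally, I would establish that the $J_m$ are mutually distinct. For this I expect to reuse the mechanism behind Theorem~\ref{LO-mainthm}, distinguishing the knots by an invariant that grows with $m$---such as the hyperbolic volume of the complement or the bridge number---so that each value is attained by only finitely many of the $J_m$. Discarding repetitions then extracts the desired infinite family of genuinely different knots satisfying $X_{J_0}(n) \approx X_{J_1}(n) \approx X_{J_2}(n) \approx \cdots$.
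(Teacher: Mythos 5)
There are two genuine gaps. First, your definition of $J_m$ as the $m$-fold annulus twist of a fixed $J_0$ does not give the $3$-dimensional statement for $n \neq 0$. In the annulus-presentation setting the surface $A \cup b(I\times I)$ induces the $0$-framing on the knot, so a plain annulus twist preserves only the $0$-surgery; that is exactly the case $(*0)=(A)$ already treated in \cite{AJOT}. The paper's essential new idea is the operation $(*n)$, the composition of the annulus twist with the operation $(T_n)$ (adding a $(-1/n)$-framed unknot near $c$ and $\partial A$ and blowing down), which corrects the surgery slope so that Theorem~\ref{thm:diffeo3} yields $M_{K}(n)\approx M_{K'}(n)$; your proposal never introduces this correction. (If instead you intend the Section~\ref{sec:LO} mechanism of twisting along the annulus $A_n$ obtained after $-1/n$-surgery on an auxiliary component, note that the paper explicitly does \emph{not} know whether that family $\{k_n^m\}$ satisfies the $4$-dimensional statement; it is left as an open question at the end of Section~\ref{sec:LO}.)

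Second, the $4$-dimensional upgrade is not obtained in the paper by realizing the twist as honest handle slides over an auxiliary canceling pair, and you give no mechanism for doing so; this is precisely the step you dismiss as ``bookkeeping,'' but it is the mathematical content. The boundary diffeomorphism $f\colon \partial X_{K}(n)\to\partial X_{K'}(n)$ produced by Theorem~\ref{thm:diffeo3} is built from blow-ups and Rolfsen twists on $(-1/n)$-framed curves, which do not lift to $4$-dimensional handle moves. The paper circumvents this with Akbulut's carving lemma (Lemma~\ref{lem:extend}, resting on Cerf's theorem that $\Gamma_4=0$): one checks that $f(\mu)$, the image of a $0$-framed meridian of $K$, is a $0$-framed unknot in the diagram of $X_{K'}(n)$ such that replacing it by a dotted $1$-handle yields $B^4$; the extension of $f$ over the $2$-handlebodies is then non-constructive. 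Your assertion that ``the entire twist is realized by integer-framed handle slides'' is exactly what is not established, and it is not what the paper proves. A more minor divergence: the paper distinguishes the $J_i$ by showing that the degrees of their Alexander polynomials strictly increase (the ``good annulus presentation'' machinery of Lemma~\ref{lem:technical}), rather than by hyperbolic volume or bridge number; your alternative would require its own hyperbolicity or bridge-number argument, which you have not supplied for this family.
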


The knots $J_0$ and $J_1$ 
in Theorem~\ref{thm:main} (for $n>0$) are depicted in Figure~\ref{fig:8_20-1}, 
where the rectangle labelled $n$ stands for $n$ right-handed full twists.
Note that $J_0$ is the knot $8_{20}$ in Rolfsen's table~\cite{Rolfsen}. 
The members of each infinite family 
are distinguished by their Alexander polynomials  when $n \ne 0$. 
When $n = 0$, they are distinguished by hyperbolic volume (see \cite{AJOT}). 

\begin{figure}[!htb]
\centering
\begin{overpic}[width=0.65\textwidth]{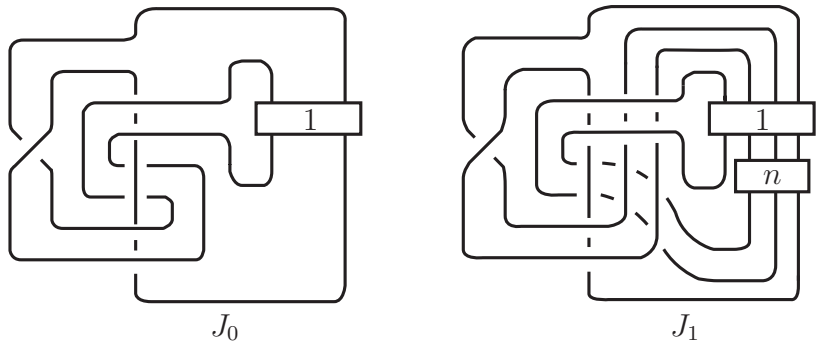}
\put(25,-4){$J_0$}
\put(82,-4){$J_1$}
\put(36.5,21.5){$1$}
\put(92.5,21.5){$1$}
\put(93.5,14.8){$n$}
\end{overpic}
\caption{The knots $J_0$ and $J_1$ such that $X_{J_0}(n) \approx X_{J_1}(n)$. }
\label{fig:8_20-1}
\end{figure}

\subsection*{Acknowledgments}
The first and second authors would like to express their gratitude to 
Yuichi Yamada and other participants of 
the handle seminar organized by Motoo Tange. 
Section~\ref{sec:AJ} would not have arisen 
without Yamada's interest in annulus twists. 
The third and fourth authors would like to thank Kyle Larson for very helpful conversations, and Neil Hoffman for his help with HIKMOT. 
The authors also thank the referees for careful reading of our draft 
and helpful suggestions. 
The first author was supported by JSPS KAKENHI Grant Number 13J05998.

\section{First family of knots}\label{sec:LO}

The Dehn surgeries on a knot, $K$, 
in the $3$-sphere are parameterized by their surgery slopes. These surgery slopes are described by $p/q \in \mathbb{Q} \cup \{\infty\}$, meaning that the slope is a curve that runs $p$ times meridionally and $q$ times longitudinally (using the preferred longitude)
along the boundary of the exterior of $K$. We write $M_K(p/q)$ for the $p/q$ 
Dehn surgery on $K$. In this notation, an $n$-surgery on $K$ refers to the
integer surgery $M_K(n/1)=M_K(n)$. 

\begin{defn}\label{defL}
Let $\calL = k \cup l_1 \cup l_2 \cup l_3$ be the link pictured in Figure~\ref{fig:Example}.
Let $\calL(\alpha, \beta, \delta, \gamma)$ be the corresponding Dehn surgery on $\calL$.
Here the surgery slopes $\alpha,\beta,\delta, \gamma$ will be either in $\mathbb{Q} \cup \{ \infty \}$,
using the meridian-longitude coordinates on the boundary of a knot in $S^3$ (with a right-handed orientation
on $S^3$), or an asterisk, meaning
that no surgery is done on that component and the component is seen as a knot in the
surgered manifold. (We use the notation $\calL(\alpha, \beta, \delta, \gamma)$ rather than
$M_\calL(\alpha, \beta, \delta, \gamma)$, because, when there are asterisks among the arguments, 
this denotes a link in the surgered manifold.) 
\end{defn}

\begin{figure}[!htb]
\centering
\begin{overpic}[width=.5\textwidth]{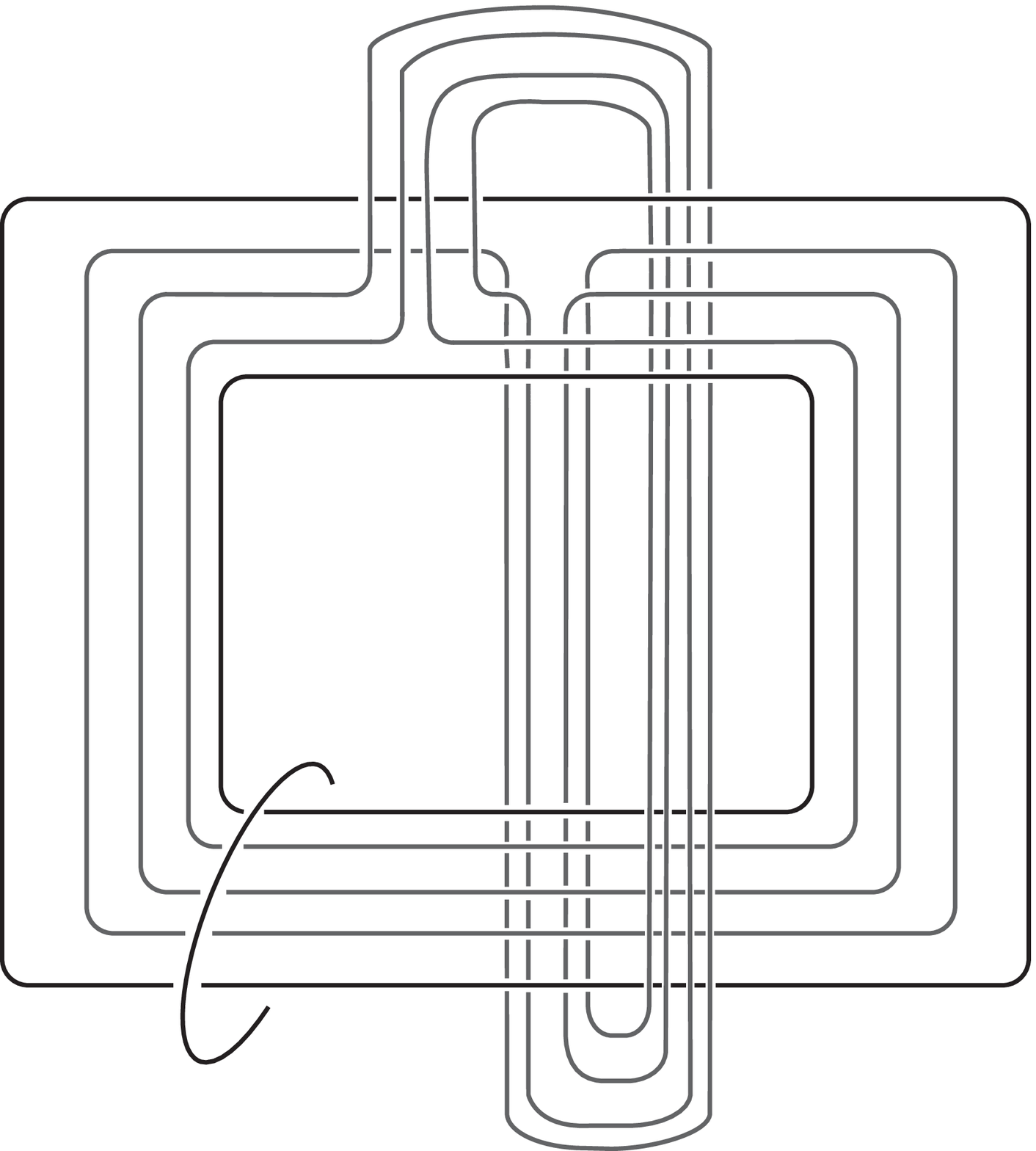}
\put(29,95){$k$}
\put(1,10.5){$l_1$}
\put(22,7){$l_3$}
\put(20.5,50){$l_2$}
\end{overpic}
\caption{The link $\calL$}
\label{fig:Example}
\end{figure}

The main result of this section, giving our first family of knots by surgery on the link
$\calL$, is the following theorem.

\begin{thm}\label{LO-mainthm}
For integers $m$, $n$, $k_n^m=\calL(*,-1/m,1/m,-1/n)$ is a knot in $S^3$. Furthermore $M_{k_n^{m_1}}(n)$
is homeomorphic to $M_{k_n^{m_2}}(n)$ for any integers $m_1,m_2$.
\begin{enumerate}
\item 
For a fixed $n \neq 0$,
the bridge number of $k_n^m$ tends to infinity as $m$ tends to infinity.
\item 
For any integer $n$, 
there is a $C_n>0$ such that if $m_2 > m_1 > C_n$, 
then $k_n^{m_1}$ and $k_n^{m_2}$ are hyperbolic knots 
with the hyperbolic volume of $k_n^{m_2}$ larger than that of $k_n^{m_1}$.
\end{enumerate}
In particular, for each integer $n$ there are infinitely many different knots in the 
family $\{k_n^m\}$.
\end{thm}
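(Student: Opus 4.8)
The plan is to read off the geometry encoded by $\calL(\ast,-1/m,1/m,-1/n)$, use it to produce the knot and the common surgery, and only then to distinguish the $k_n^m$ by the invariants in (1) and (2). First I would observe that $l_1$ and $l_2$ cobound an embedded annulus $A\subset S^3$ and that the slopes $-1/m,1/m$ are precisely the framings realizing the $m$-fold twist $\tau_A^m$ along $A$, which is a self-homeomorphism of $S^3$. Since $\pm 1/m$ surgery on an unknot returns $S^3$ and $l_3$ is an unknot carrying $-1/n$, performing the three surgeries on $l_1,l_2,l_3$ again yields $S^3$, so the image $k_n^m$ of $k$ is a knot in $S^3$.

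For the common surgery I would write $M_{k_n^m}(n)=\calL(n,-1/m,1/m,-1/n)$ and absorb the $l_1\cup l_2$ surgery into the ambient twist. Performing $\tau_A^m$ first carries the pair $(k,l_3)$ to $(k_n^m,l_3^{(m)})$ inside a copy of $S^3$, whence
\[ M_{k_n^m}(n)=M_{k_n^m\cup l_3^{(m)}}(n,-1/n)\cong M_{k\cup l_3}(n,-1/n), \]
the final homeomorphism being induced by the self-homeomorphism $\tau_A$ of $S^3$. As the right-hand side does not involve $m$, all the manifolds $M_{k_n^m}(n)$ coincide. The one point needing care is that the $n$-framing on $k$ survives the twist; this holds once one checks from Figure~\ref{fig:Example} that $k$ meets $A$ algebraically trivially, and it is exactly the bookkeeping carried out in the annulus-twist literature, so I expect it to be routine.

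The real content is showing the $k_n^m$ are genuinely different, and here I would argue in two ways. For (2), the exterior of $k_n^m$ is obtained from the fixed cusped manifold $X_n$ --- the exterior of $k\cup l_1\cup l_2$ after the $-1/n$ filling on $l_3$ --- by Dehn filling the $l_1$ and $l_2$ cusps along the slopes $\mp 1/m$, whose lengths grow without bound as $m\to\infty$. Granting that $X_n$ is hyperbolic (verifiable in practice with HIKMOT), Thurston's hyperbolic Dehn surgery theorem makes $k_n^m$ hyperbolic for all $m$ past some $C_n$, with volumes increasing to $\vol(X_n)$; the Neumann--Zagier asymptotics upgrade this to strict monotonicity, so the volumes, and hence the knots, are pairwise distinct. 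For (1), with $n\neq 0$, I would bound the bridge number of $k_n^m$ from below by the twisting: the essential way $k$ threads $A$ should force any bridge decomposition to interact with the twist region in a number of points growing with $m$, so that the bridge number of $k_n^m$ tends to infinity.

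Establishing that lower bound is the step I expect to be the main obstacle, since the bridge number is a global minimum over all bridge spheres and does not see the twisting directly; I would approach it through thin-position arguments together with the incompressibility of the image of $A$ in the knot exterior, isotoping a minimal bridge sphere into efficient position relative to the twist region. Once either (1) or (2) is in hand the final statement is immediate: the family $\{k_n^m\}$ then realizes infinitely many values of a knot invariant --- hyperbolic volume, or bridge number when $n\neq 0$ --- and so must contain infinitely many distinct knots.
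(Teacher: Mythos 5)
Your argument for the central claim --- that the $M_{k_n^m}(n)$ are all homeomorphic --- rests on treating the $m$-fold annulus twist $\tau_A^m$ as ``a self-homeomorphism of $S^3$'' carrying $(k,l_3)$ to $(k_n^m,l_3^{(m)})$, and this is exactly where the proof breaks. The twist map is the identity on $A\times\{0,1\}$ but not on $\partial A\times[0,1]$, so it induces a homeomorphism only of $S^3-\nbhd(l_1\cup l_2)$, and it extends to the closed manifolds only after re-filling $l_1,l_2$ along the \emph{new} slopes $-1/m,1/m$; it is not ambient in $S^3$. Since $k$ meets $A$ in four points, $\tau_A^m(k)$ is a genuinely different knot from $k$ --- indeed, if $\tau_A^m$ were an ambient homeomorphism taking $k$ to $k_n^m$, then all the $k_n^m$ would be equivalent knots and the theorem would be vacuous. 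Worse, your displayed homeomorphism would apply verbatim to every surgery slope, showing that \emph{all} surgeries on $k_n^m$ and $k_n^0$ agree, which forces the knots to coincide; an argument that proves too much cannot be right. The annulus $A$ of Figure~\ref{fig:A} is the correct tool for the \emph{other} half of the setup --- identifying $k_n^m$ as a knot in $S^3$, obtained from $k_n^0$ by twisting along $A_n$ --- but it cannot give the surgery equivalence precisely because $k$ punctures it.

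The missing idea is a second surface: $k$ lies on a twice-punctured Klein bottle $Q$ cobounded by $l_1$ and $l_2$ in the complement of $l_3$ (Figure~\ref{fig:Q}). The four-punctured sphere $P=Q\cap E_{\calL}$ meets $\partial\nbhd(k)$ in two curves of slope $0/1$, so \emph{after} the surgery on $k$ these cap off to give an annulus $\widehat P$ between $l_1$ and $l_2$ inside the surgered manifold; Dehn twisting along $\widehat P$ there changes the filling slopes on $l_1,l_2$ from $\infty,\infty$ to $-1/m,1/m$ without changing the homeomorphism type. That is the mechanism making $\calL(0/1,-1/m,1/m,-1/n)$ independent of $m$, and it only exists once $k$ has been filled. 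Relatedly, your framing bookkeeping is off: the common manifold is $\calL(0/1,-1/m,1/m,-1/n)$, not $\calL(n,\dots)$; the slope $0/1$ in the original coordinates becomes $n$ in the coordinates of $k_n^m$ because of the $-1/n$ surgery on $l_3$ (this is the content of Claim~\ref{clm:slope}), not because ``$k$ meets $A$ algebraically trivially.'' Your outline of (2) matches the paper's in spirit, though you would still need to handle $n=-2$, where the relevant link exterior is toroidal rather than hyperbolic and one must pass to the hyperbolic piece; and for (1) you correctly identify the lower bound on bridge number as the hard step but leave it open --- the paper obtains it from Corollary~1.4 of \cite{bgl} after ruling out essential annuli between $T_1$ and $T_2$ in $E_n$ (Claim~\ref{clm:noannulus}) and producing a catching surface, rather than by a direct thin-position argument.
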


\noindent{\it Proof.}

We first show that for any integer $n$, the $n$-surgery on each $k_n^m$ yields the same
manifold for each $m$.
Figure~\ref{fig:Q} shows that the knot $k$ is a non-separating, orientation-preserving curve on a twice-punctured Klein bottle,
$Q$, cobounded by $l_1$ and $l_2$ and in the complement of $l_3$.

\begin{figure}[!htb]
\centering
\begin{overpic}[width=.5\textwidth]{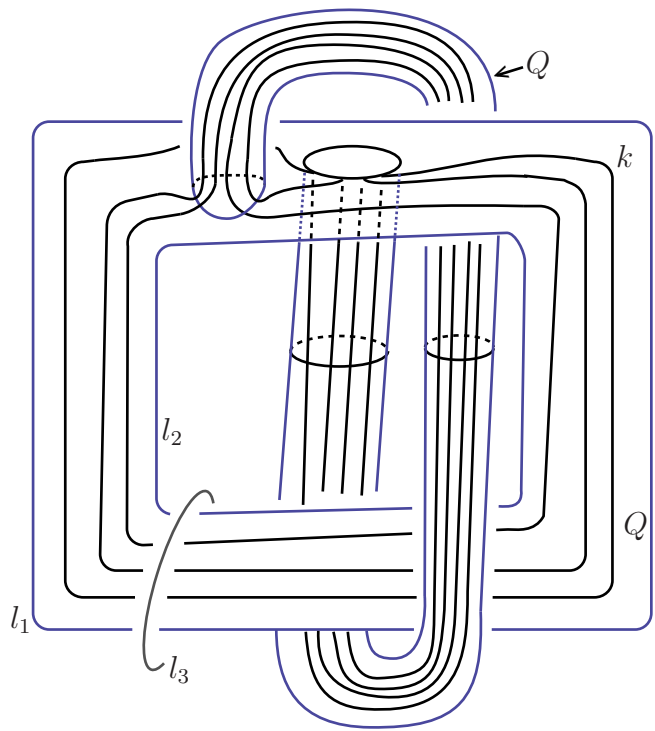}
\put(-3,14){$l_1$}
\put(18,40){$l_2$}
\put(19,7){$l_3$}
\put(68.5,91){$Q$}
\put(82,27){$Q$}
\put(81,78){$k$}
\end{overpic}
\caption{The $2$-punctured Klein bottle $Q$ containing $k$}
\label{fig:Q}
\end{figure}

Thus $Q - \nbhd(\calL)$ is a $4$-punctured sphere, $P$, 
properly embedded in the exterior, $E_{\calL}$, of $\calL$ in $S^3$. 
The boundary of $P$ has one component on 
each of $\partial \nbhd(l_1)$ and $\partial \nbhd(l_2)$ of slope $0/1$ and 
two components on $\partial \nbhd(k)$ of slope $0/1$. 
To check that the slope of $P$ on $\partial \nbhd(k)$ is $0/1$, 
one can verify that the linking number of such a boundary component is zero 
with respect to $k$. 
To do this, it is convenient to use $P$ as in the proof of Claim~\ref{clm:slope},  below, when $m=n=0$. 

Let $\widehat{P}$ be the properly embedded annulus in the exterior of
$\calL(0/1,*,*,*)$ obtained by capping off the two components of $P$ along $\partial \nbhd(k)$.
Dehn twisting this exterior along the annulus $\widehat{P}$ $m$ times 
(see Remark~\ref{twisting}), induces a homeomorphism
of the $3$-manifolds $\calL(0/1,-1/m,1/m,-1/n)$ and $\calL(0/1,-1/0,1/0,-1/n)$ for each $m,n$.

\begin{rem}\label{twisting}
Let $A$ be an annulus embedded in a $3$-manifold $M$ 
with $\partial A$ the link $L_1 \cup L_2$ in $M$. 
Let $A'=A \cap (M - \nbhd(L_1 \cup L_2))$. 
Fix an orientation on $M$. 
Pick an orientation on $A$. 
This induces an orientation on $L_i$ and its meridian $\mu_i$.
Let $A \times [0,1]$ be a product neighborhood of $A$ in $M$ 
so that the corresponding interval orientation on $A' \times [0,1]$ 
corresponds to the meridian orientation of $L_1$. 
Pick coordinates $A=e^{2 \pi i \theta} \times [0,1]$, with $\theta \in [0,1]$, 
so that $e^{2 \pi i \theta} \times \{ 0 \}$, 
$\theta \in [0,1]$, is the oriented $L_1$. 
Define the homeomorphism $f_m \colon  A \times [0,1] \to A \times [0,1]$ by 
$(e^{2 \pi i \theta},s,t) \mapsto (e^{2 \pi i(\theta + mt)},s,t)$. 
Note that $f_m$ restricted to $A \times \{0,1\}$ is the identity. 
Let $A$ be as above and $K$ be a knot in $M$ 
which intersects $A \times [0,1]$ in $[0,1]$ fibers. 
Let $K^m$ be the knot in $M$ obtained by applying
$f_m$ to $K \cap (A \times [0,1])$ (and the identity on $K$ outside this region). 
We say that $K^m$ is obtained from $K$ by {\em twisting along $A$ ($m$ times)}, 
or that $K^m$ is obtained from $K$ 
by applying an {\em $m$-fold annulus twist along $A$}. 
In particular, we say that $K^1$ is the result of applying to $K$ an {\em annulus twist along $A$}. 
Note that the sign of $m$ above depends only on the orientation of $M$ 
and on the labeling, $L_1$ and $L_2$, of $\partial A$. 
The above agrees with the notion of an annulus twist along $A$ in \cite[Section 2]{AT}, 
where $M = S^3$ with a right-handed orientation, $A$ is a planar annulus, $L_1$ is the outside boundary of $A$, and $L_2$ is the inside boundary of $A$. 
The manifolds $M$ with which we are working are $S^3$ or Dehn surgeries on $S^3$.  Our convention, is to take the right-handed orientation of $S^3$ and the induced orientation on these Dehn surgeries. 
Furthermore, note that $f_m$ induces a homeomorphism
$h_m \colon M-\nbhd(L_1 \cup L_2) \to M-\nbhd(L_1 \cup L_2)$ by applying $f_m$
in $A' \times [0,1]$ along with the identity outside this neighborhood. 
We refer to this homeomorphism $h_m$ of $M-\nbhd(L_1 \cup L_2)$ 
as {\em Dehn-twisting along $A'$ ($m$ times)}. 
In this case, $A'$ is properly embedded. 
\end{rem}

\begin{figure}[!htb]
\centering
\begin{overpic}[width=.3\textwidth]{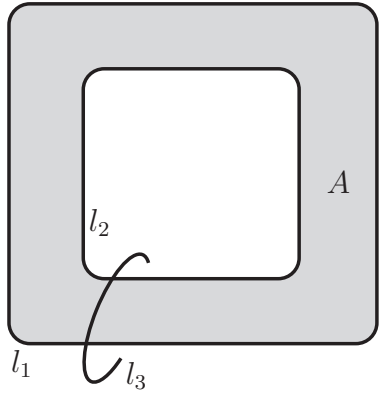}
\put(1,3){$l_1$}
\put(21.3,40){$l_2$}
\put(31,0){$l_3$}
\put(83.5,50){$A$}
\end{overpic}
\caption{The annulus $A$ bounded by $l_1 \cup l_2$}
\label{fig:A}
\end{figure}

\bigskip

Figure~\ref{fig:A} shows an annulus $A$ cobounded by $l_1$ and $l_2$ in the complement of $l_3$ (which can be taken to
intersect $k$ algebraically zero and geometrically four times and which induces the framing $0/1$ on each of $l_1$ and $l_2$), which becomes an annulus $A_n$ cobounded by $l_1$ and $l_2$ after $-1/n$ surgery on $l_3$. 
Dehn-twisting the exterior of $l_1 \cup l_2$ in $\calL(*,*,*, -1/n)$ 
along $A_n$ $(-m)$ times 
(really the restriction of $A_n$ to this exterior, Remark~\ref{twisting}) 
induces an orientation-preserving homeomorphism of the manifold 
$\calL(1/0,-1/0,1/0,-1/n)=S^3$ to the manifold $\calL(1/0, -1/m, 1/m, -1/n)$. 
The inverse of this 
homeomorphism identifies $k_n^m$ as a knot in $S^3$ 
obtained from $k_n^0$ by twisting along $A_n$ $m$ times (see Remark~\ref{twisting}). 

The following claim finishes the argument that the $n$-surgeries on $k_n^m$ are the same manifold.

\begin{claim}\label{clm:slope}
For each $m$, $n$, $\calL(0/1,-1/m,1/m,-1/n)=M_{k_n^m}(n)$.
\end{claim}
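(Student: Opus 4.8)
The plan is to observe that both $\calL(0/1,-1/m,1/m,-1/n)$ and $M_{k_n^m}(n)$ are obtained by Dehn filling one and the same manifold, namely the exterior $E=E_\calL(\ast,-1/m,1/m,-1/n)$ of $k_n^m$ in $S^3$, and that they differ only in the filling slope on $\bdry\nbhd(k)$. Since the meridian $\mu$ of $k$ is the same curve on $\bdry\nbhd(k)$ in either description, it suffices to compare longitudes. By Definition~\ref{defL} the slope $0/1$ is the Seifert longitude $\lambda_0$ of $k$ in the original $S^3$, while the $n$-slope on $k_n^m$ is $n\mu+\lambda_1$, where $\lambda_1$ is the Seifert longitude of $k_n^m$ in the surgered $S^3=\calL(1/0,-1/m,1/m,-1/n)$. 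Thus the claim is equivalent to the single framing identity $\lambda_0=n\mu+\lambda_1$ on $\bdry\nbhd(k)$; that is, the surgeries $(-1/m,1/m,-1/n)$ on $(l_1,l_2,l_3)$ must shift the $0$-framing of $k$ by exactly $-n$.

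I would verify this framing shift homologically, working in $H_1(E)\cong\Z\langle\mu\rangle$. Writing $\lambda_0=\mathrm{lk}(k,l_1)\,\mu_1+\mathrm{lk}(k,l_2)\,\mu_2+\mathrm{lk}(k,l_3)\,\mu_3$ and using the three Dehn-filling relations to reduce each $\mu_i$ to a multiple of $\mu$, I expect the contributions of the annulus-twist surgeries on $l_1$ and $l_2$ to cancel: because the annulus $A$ cobounded by $l_1\cup l_2$ meets $k$ algebraically zero (four times geometrically), one has $\mathrm{lk}(k,l_1)=\mathrm{lk}(k,l_2)$, and the oppositely signed slopes $-1/m$ and $+1/m$ then induce no net change of framing on $k$, independently of $m$. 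The remaining $-1/n$ surgery on $l_3$, for which $\mathrm{lk}(k,l_3)=\pm1$, shifts the framing by precisely $-n$, giving $\lambda_0=n\mu$ in $H_1(E)$ as required. I would anchor the computation at $m=n=0$, where there are no surgeries at all and $0/1$ is tautologically the $0$-framing; here the surface $P$ is used, exactly as indicated earlier, to confirm that $P$ meets $\bdry\nbhd(k)$ in the slope $0/1$ by checking that each such boundary component has linking number zero with $k$.

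The main obstacle is the sign-and-cross-term bookkeeping. I must fix the right-handed orientation of $S^3$ together with the convention $p/q=p\mu+q\lambda$, read off the pairwise linking numbers $\mathrm{lk}(l_i,l_j)$ as well as the values $\mathrm{lk}(k,l_i)$ from Figures~\ref{fig:Example}--\ref{fig:A}, and then check that the $-n$ contributed by $l_3$ is genuinely uncontaminated by the $l_1,l_2$ fillings, i.e. that the mutual linkings feed through the relations without disturbing the final coefficient. To organize this cleanly I would first treat the case $m=0$, where the only surgery is $-1/n$ on $l_3$ and the shift $-n$ is immediate from $\mathrm{lk}(k,l_3)=\pm1$, and then show that turning on the annulus-twist surgeries on $l_1,l_2$ leaves this shift unchanged, precisely because $A$ meets $k$ algebraically zero. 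This is consistent with the independent fact, established above via the twist along $\widehat{P}$, that $\calL(0/1,-1/m,1/m,-1/n)\cong\calL(0/1,-1/0,1/0,-1/n)$, and combining the two steps yields $\calL(0/1,-1/m,1/m,-1/n)=M_{k_n^m}(n)$ for all $m,n$.
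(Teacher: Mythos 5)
Your plan is sound and, once the bookkeeping is done, it does prove the claim; but it is a genuinely different route from the paper's. The paper never writes down the linking matrix of $\calL$: it identifies the filling slope $\alpha(m,n)$ on $k$ as the boundary slope of the $4$-punctured sphere $P_n^m$ (the image of $Q-\nbhd(\calL)$ under the annulus twists), uses that surface to get the single relation $2\,\mathrm{lk}(\alpha(m,n),k_n^m)=-\mathrm{lk}(k_n^m,l_1\cup l_2)$, and evaluates the right-hand side as $-2n$ by watching how linking numbers change under the $-1/n$ surgery on $l_3$. You instead discard the surface and compute the class of the old longitude $\lambda_0$ in $H_1$ of the exterior of $k_n^m$ directly from the linking data and the three filling relations. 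This works: with orientations induced by $\partial A$, the inputs are $\mathrm{lk}(l_1,l_2)=0$ (the $0$-framing condition on $A$), $\mathrm{lk}(l_1,l_3)=\mathrm{lk}(l_2,l_3)=\pm1$ ($A$ is disjoint from $l_3$, and $-1/n$ surgery on $l_3$ must change $\mathrm{lk}(l_1,l_2)$ from $0$ to $n$), $\mathrm{lk}(k,l_1)=\mathrm{lk}(k,l_2)$ ($A\cdot k=0$), and $\mathrm{lk}(k,l_3)=\pm1$; these force $\mu_1+\mu_2=0$ and $\mu_3=n\,\mathrm{lk}(k,l_3)\,\mu$ in the quotient, whence $[\lambda_0]=n\,\mathrm{lk}(k,l_3)^2\mu=n\mu$ independently of $m$. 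Two cautions about where your sketch is thinnest. First, the cancellation of the $l_1,l_2$ contributions is \emph{not} just the naive $-m\,\mathrm{lk}(k,l_1)^2+m\,\mathrm{lk}(k,l_2)^2=0$ of a pair of independent twists: the filling relations couple $\mu_1,\mu_2$ to $\mu_3$, and killing those cross-terms uses $\mathrm{lk}(l_1,l_3)=\mathrm{lk}(l_2,l_3)$ in addition to $\mathrm{lk}(k,l_1)=\mathrm{lk}(k,l_2)$; you flag this, but it is the one place a wrong orientation convention would silently break the argument. Second, $\mathrm{lk}(k,l_3)=\pm1$ is a genuine hypothesis that must be read off Figure~\ref{fig:Example} (it is exactly what the paper's ``$-2n$'' computation encodes), not a formal consequence of the setup. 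The trade-off: the paper's surface-based argument only ever needs the two sums that $P_n^m$ controls and stays close to the geometry driving the whole construction, while yours is a self-contained, figure-light verification once the linking matrix is recorded.
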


\noindent {\it Proof of Claim~\ref{clm:slope}:} 
$\calL(0/1,-1/m,1/m,-1/n)$ is clearly a surgery on $k_n^m$. 
Our goal is to identify the slope of this surgery, $\alpha(m,n)$, in terms of the coordinates on $k_n^m$ as a knot in $S^3$.
Let $P_n$ be the $4$-punctured sphere $P$ after $-1/n$ surgery on $l_3$. Then $\alpha(0,n)$ is the slope of $P_n$ on $k_n^0$.

Twisting $k_n^0$ along $A_n$ induces a homeomorphism of the exterior 
of $l_1 \cup l_2 \cup k_n^0$ in $S^3$ to the exterior of $l_1 \cup l_2 \cup k_n^m$ and
consequently takes $P_n$ to a $4$-punctured sphere $P_n^m$ in the exterior of $l_1 \cup
l_2 \cup k_n^m$. The slope $\alpha(m,n)$ is the slope of $P_n^m$ on $k_n^m$. We may use
$P_n^m$ to compute the linking number of the slope $\alpha(m,n)$ with $k_n^m$ and consequently the
coordinates of the slope. Orient $k_n^m$ and take the orientation on $P_n^m$ that induces an
orientation on $\partial P_n^m \cap \nbhd(k_n^m)$ that agrees with that on $k_n^m$. Then twice
the linking number of $\alpha(m,n)$ with the oriented $k_n^m$ in $S^3$ is the negative of the linking number between the oriented $k_n^m$ and $l_1 \cup l_2$, given the orientation induced by $P_n^m$ on $l_1 \cup l_2$. By considering $k_n^m$ as twisting $k_n^0$ along $A_n$ away from $l_1 \cup l_2$,
one sees that this latter linking number is $-2n$ (one may verify that in the $1/0$ surgery on $l_3$, this linking number is zero, then observe how the linking number changes under $-1/n$ surgery). 
Thus $\alpha(m,n)$ is the slope $n/1$ as desired.
\eop{(Claim~\ref{clm:slope})}

\begin{claim}\label{clm:noannulus}
Let $E_n$ be the exterior of $\calL(*,*,*,-1/n)$ and $T_1,T_2$ be the components of
$\partial E_n$ coming from $\nbhd(l_1),\nbhd(l_2)$, respectively.
For each integer $n \neq -2$, the interior of $E_n$ is hyperbolic.
For every integer $n$ (including $-2$), there is no essential annulus properly 
embedded in 
$E_n$ with
one boundary component on $T_1$ and the other on $T_2$.
\end{claim}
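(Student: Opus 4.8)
The plan is to derive both assertions from the hyperbolicity of a single parent manifold together with Thurston's hyperbolic Dehn surgery theorem, and then to dispatch the single exceptional slope $n=-2$ by a direct topological argument. First I would let $W$ denote the exterior in $S^3$ of the entire four-component link $\calL = k \cup l_1 \cup l_2 \cup l_3$, a compact orientable manifold with four torus boundary components. Observe that $E_n$ is exactly the Dehn filling of $W$ along the $l_3$-cusp with slope $-1/n$, the cusps coming from $k$, $l_1$, and $l_2$ being left unfilled; in particular all the $E_n$ are fillings of one fixed manifold. Using the rigorous verification tool HIKMOT, I would certify that the interior of $W$ carries a complete finite-volume hyperbolic structure.

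For Part 1, I would invoke Thurston's hyperbolic Dehn surgery theorem: once $W$ is hyperbolic, all but finitely many fillings of the $l_3$-cusp are hyperbolic. To promote ``all but finitely many'' to the precise statement ``all $n \neq -2$,'' I would estimate slope lengths on a fixed horoball neighborhood of the $l_3$-cusp. In the meridian-longitude basis the slope $-1/n$ winds $n$ times longitudinally, so its cusp-length grows linearly in $|n|$; by the $6$-theorem of Agol and Lackenby, every slope of length exceeding $6$ yields a hyperbolic filling, which disposes of all but finitely many small values of $|n|$. For the remaining finite list of exceptional candidates I would certify hyperbolicity of each $E_n$ by HIKMOT, finding that $n=-2$ (i.e.\ the slope $-1/(-2)=1/2$ on $l_3$) is the sole non-hyperbolic filling.

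For Part 2 in the range $n \neq -2$ the conclusion is then automatic: a finite-volume cusped hyperbolic $3$-manifold is atoroidal and anannular, so $E_n$ contains no essential annulus whatsoever, and a fortiori none joining $T_1$ to $T_2$. The only genuine remaining work is the exceptional manifold $E_{-2}$, for which hyperbolic geometry is unavailable. Here I would first identify $E_{-2}$ explicitly: performing $1/2$-surgery on $l_3$ by Kirby calculus turns $k \cup l_1 \cup l_2$ into an explicit link in $S^3$, and I expect its exterior to be a graph manifold whose JSJ/Seifert structure can be read off directly. With that structure in hand, every essential annulus in $E_{-2}$ is isotopic to a vertical annulus in some Seifert piece, and I would check that the Seifert fibration meets the boundary tori $T_1$ and $T_2$ in slopes that cannot be the two ends of a single vertical annulus, so that no essential annulus runs from $T_1$ to $T_2$.

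The hard part will be twofold. The first difficulty is that the $6$-theorem only eliminates large $|n|$, so confirming that $n=-2$ is the \emph{only} exceptional slope among the family $\{-1/n\}$ rests on an honest finite computer verification rather than a purely theoretical bound. The second, and more delicate, difficulty is the single non-hyperbolic case $E_{-2}$: one must argue topologically rather than geometrically, either by pinning down its Seifert/graph-manifold decomposition precisely or, as a fallback, by putting a hypothetical essential annulus into minimal position against the incompressible $4$-punctured sphere $P_{-2}$ and running an innermost-circle and outermost-arc analysis to force the annulus to compress or become boundary-parallel. I expect the clean identification of $E_{-2}$ to be the crux, since it is what lets one avoid a lengthy combinatorial intersection argument.
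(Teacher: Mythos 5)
Your treatment of the hyperbolic cases $n \neq -2$ is sound and runs parallel to the paper's, differing only in how the problem is reduced to a finite computer check: you propose the $6$-theorem applied to the lengths of the slopes $-1/n$ on the $l_3$-cusp, whereas the paper exploits the fact that the slope $1/2$ on $l_3$ (i.e.\ $n=-2$) is a \emph{toroidal} filling --- under it $l_1$ becomes a $(2,-1)$-cable of a knot $l_1'$ that links $k$ once --- and then cites Gordon and Gordon--Wu to get hyperbolicity for $|n+2|>3$, leaving only $n\in\{1,0,-1,-3,-4,-5\}$ for SnapPy/HIKMOT. Either reduction is legitimate, though yours additionally requires verified cusp-length estimates on top of the verified hyperbolicity of the link exterior.

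The genuine gap is in your handling of $E_{-2}$. Your primary plan rests on the expectation that $E_{-2}$ is a graph manifold whose essential annuli can be made vertical in Seifert pieces; this is false. The JSJ decomposition of $E_{-2}$ has exactly two pieces glued along a torus $T$: the $(2,-1)$-cable space containing $T_1$, and the exterior $E_{-2}'$ of $l_1'\cup l_2\cup k$ after $1/2$-surgery on $l_3$, which is \emph{hyperbolic} (SnapPy, certified by HIKMOT) and contains $T_2$. Since $T_2$ lies on the boundary of a hyperbolic JSJ piece, there is no Seifert fibration near $T_2$ and no slope comparison to run, so the vertical-annulus analysis you describe cannot even be set up. The argument that works (and is the paper's) is to intersect a hypothetical essential annulus joining $T_1$ to $T_2$ with the incompressible torus $T$, surger away trivial circles of intersection, and observe that an outermost piece lying in $E_{-2}'$ is an essential annulus or disk there, contradicting the hyperbolicity of $E_{-2}'$. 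Your fallback of cutting along the $4$-punctured sphere $P_{-2}$ is not obviously viable --- that surface also meets the $k$-cusp and there is no reason the combinatorics close up --- whereas the JSJ torus $T$ is the surface that makes the argument immediate.
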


\noindent {\it Proof of Claim~\ref{clm:noannulus}:}
SnapPy \cite{snappy} shows that $\calL$ is hyperbolic. The program HIKMOT 
\cite{hikmot} certifies this calculation.
The sequence of isotopies Figure~\ref{fig:2surgery}(a)-(c) shows that  $l_1$ in  $\calL(*,*,*,1/2)$ is a $(2,-1)$-cable on the knot $l_1'$ pictured
in Figure~\ref{fig:2surgery}(d) (the 3-manifold $H$ in Figure~\ref{fig:2surgery} is a
neighborhood of the punctured Klein bottle $Q$ and $l_1$ is pushed
off $H$). Because the linking number of $l_1'$ with $k$ is one, the exterior of $k \cup l_1 \cup l_2$  in
$\calL(*,*,*,1/2)$ is toroidal. It follows from \cite{gordon} 
and \cite{gordon-wu} that the interior of $E_n$ is
hyperbolic as long as $|n+2|>3$.

For $n \in \{1,0,-1,-3,-4,-5 \}$, SnapPy shows that $E_n$ is hyperbolic 
and HIKMOT certifies this calculation. 
Thus the interior of $E_n$ is hyperbolic, and in particular $E_n$ is anannular,
as long as $n \neq -2$.

We must still show that $E_{-2}$ is anannular. As mentioned above,
Figure~\ref{fig:2surgery}(d) shows that $E_{-2}$ is the union, along a
torus $T$, of the exterior of a $(2,-1)$-cable of the core
of a solid torus and
the exterior, $E_{-2}'$, of $l_1' \cup l_2 \cup k$ after $1/2$ surgery on $l_3$. 
SnapPy shows $E_{-2}'$ is hyperbolic and HIKMOT certifies this. 
Now assume that there
were an essential annulus in $E_{-2}$ between $T_1$ and $T_2$, 
and consider its intersection with the incompressible torus $T$. 
We may surger away any closed curves of intersection which are trivial on $T$. 
Then an outermost component of intersection with $E_{-2}'$ 
will give rise to an essential annulus or disk properly embedded in $E_{-2}'$,  contradicting the hyperbolicity of $E_{-2}'$. 
\eop{(Claim~\ref{clm:noannulus})}

\begin{figure}[!htb]
\centering
\begin{overpic}[width=.55\textwidth]{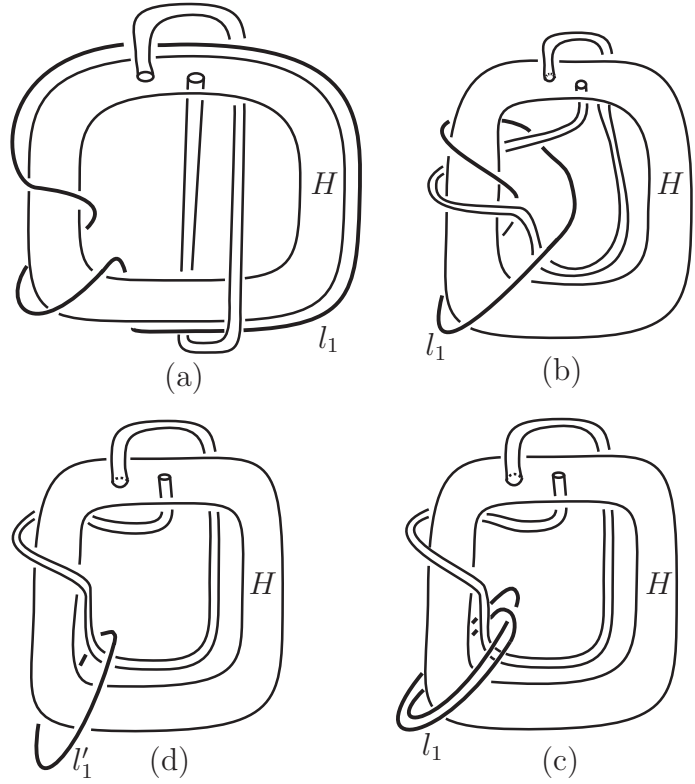}
\put(20,50){(a)}
\put(69,50.8){(b)}
\put(69,0){(c)}
\put(18,0){(d)}
\put(39,75){$H$}
\put(84,75){$H$}
\put(82.5,23){$H$}
\put(31,23){$H$}
\put(40,55){$l_1$}
\put(54,54){$l_1$}
\put(53.5,2){$l_1$}
\put(8,0){$l_1'$}
\end{overpic}
\caption{ $l_1$ in $\calL(*,*,*,1/2)$ is a $(2,-1)$-cable on $l_1'$}
\label{fig:2surgery}
\end{figure}

\bigskip 

We first verify $(1)$ of Theorem~\ref{LO-mainthm}.
As before, let $A_n$ be the annulus from Figure~\ref{fig:A} cobounded by $l_1$ and $l_2$  and after $-1/n$ surgery on $l_3$. The
knot $k_n^m$ is obtained by twisting $k$  along $A_n$ ($m$ times) in the copy of $S^3$ obtained by $-1/n$ surgery
on $l_3$.  As the linking number of $l_1$ and $l_2$ in this copy of $S^3$ is $n$, $l_1 \cup l_2$ is not
the trivial link. Then Claim~\ref{clm:noannulus} along with Corollary 1.4 of \cite{bgl} shows that for $n \neq 0$ the (genus $0$) bridge number of the knots $k_n^m$ in $S^3$ goes to infinity  as $m$ goes to infinity
(as the linking number of $l_1$ and $l_2$ is non-zero, Lemma 2.4 of \cite{bgl} shows there is a catching surface for the
pair $(A_n, k)$).
Note that since $A_0$ lies on a Heegaard sphere for $S^3$, 
the bridge numbers of $\{k_0^m\}$ will be bounded.

We now verify $(2)$ of Theorem~\ref{LO-mainthm}. By Claim~\ref{clm:noannulus},
the interior of $E_n$ is hyperbolic whenever $n \neq -2$. 
Thurston's Dehn Surgery Theorem and Theorem 1A of \cite{neumann} 
shows that there is an $C_n>0$
such that for $m >C_n$, $k_n^m$ is hyperbolic and its volume increases monotonically with
$m$. When $n = -2$, recall from the proof of Claim~\ref{clm:noannulus} that
Figure~\ref{fig:2surgery}(d) shows that $E_{-2}$ is the union, along a
torus $T$, of the exterior of a $(2,-1)$-cable of the core of a solid torus, 
and the exterior, $E_{-2}'$, of $l_1' \cup l_2 \cup k$ after $1/2$ surgery on $l_3$.
That is, identify $\calL(*,*,*,1/2)$ as a link in $S^3$ by putting two full 
left-handed twists along the linking circle $l_3$. Then $\calL(*, -1/m,1/m,1/2)$
corresponds to $(-1-2m)/m$ surgery on $l_1$ and $(1-2m)/m$ surgery on $l_2$.
The Seifert fiber on $l_1$ as a $(2,-1)$-cabling on $l_1'$ is $-2/1$. As the 
surgery slope intersects this Seifert fiber slope once, this surgery on
$l_1$ corresponds to doing a $(-1-2m)/4m$ surgery  on $l_1'$ 
(see Corollary 7.3 of \cite{gordon2}). As noted above,
HIKMOT verifies $k \cup l_1' \cup l_2$ to be hyperbolic.
Thus, an application of Theorem 1A of \cite{neumann} to the exterior $E_{-2}$
of this link, shows there is a $C_{-2}$ such that for $m > C_{-2}$, $k_{-2}^m$
is hyperbolic and its volume increases monotonically with $m$.

Since hyperbolic volume and bridge number are knot invariants, either $(1)$
 (when $n \neq 0$)
or $(2)$ shows that for an integer $n$ the family $\{k_n^m\}$ is infinite.
\eop{(Theorem~\ref{LO-mainthm})}

\begin{rem} SnapPy shows the homology spheres that arise 
in the above construction ($|n|=1$) to be hyperbolic manifolds with 
$volume(M_{k_{-1}^0}(-1))=3.400436870$ and $volume(M_{k_{1}^0}(1))=5.7167678901$. 
SnapPy shows the manifold corresponding
to $n=-2$ to be hyperbolic with $volume(M_{k_{-2}^0}(-2))=3.110698158$.
These calculations are not verified by HIKMOT.
\end{rem}

The next section shows, for each $n$, other infinite families of knots that admit 
the same $n$-surgery. 
We show that in fact the $4$-manifolds obtained by attaching a $2$-handle
to the $4$-ball along each of the knots in one of these families are diffeomorphic. 
We do not know if the same holds for the above family $\{k_n^m\}$.

\bigskip

\noindent {\bf Question} {\it Let $n$ be an integer. Are the $4$-manifolds $X_{k_n^{i}}(n)$ and
$X_{k_n^{j}}(n)$ diffeomorphic?}

\section{Second family of knots}\label{sec:AJ}

We generalize the annulus twist and provide a framework for creating knots 
yielding the same $4$-manifold. 
Problem~\ref{prob:Kirby} is solved by applying the framework to the knot $8_{20}$. 

This section is organized as follows: 
In subsection~\ref{ssec:construction}, 
we recall the definition of an annulus presentation of a knot 
and introduce the notion of a ``simple'' annulus presentation. 
We define a new operation $(*n)$ on an annulus presentation, 
which is a generalization of an annulus twist. 
For a knot $K$ with an annulus presentation and an integer $n$, 
we construct a knot $K'$ (with an annulus presentation) such that 
$M_{K}(n) \approx M_{K'}(n)$ by using the operation $(*n)$ (Theorem~\ref{thm:diffeo3}). 
In subsection~\ref{ssec:extension}, 
for a knot $K$ with a simple annulus presentation and any integer $n$, 
we construct a knot $K'$ (with a simple annulus presentation) such that 
$X_{K}(n) \approx X_{K'}(n)$ by using the operation $(*n)$ (Theorem~\ref{thm:diffeo4}). 
Note that the two knots $K$ and $K'$ are possibly the same. 
In subsection~\ref{ssec:proof}, 
we introduce the notion of a ``good'' annulus presentation, and 
show that, for a given knot with a good annulus presentation, 
the infinitely many knots constructed by using the operation $(*n)$ 
have mutually distinct Alexander polynomials when $n \ne 0$ (Theorem~\ref{thm:main2}). 
This yields Theorem~\ref{thm:main} as an immediate corollary.

\subsection{Construction of knots}\label{ssec:construction}

\subsubsection{Annulus presentation}\label{sssec:AP} 
We recall the definition of an annulus presentation\footnote{In \cite{AJOT},
it was called a \emph{band presentation}.} of a knot from \cite{AJOT, AT}. 
Let $A \subset \R^2 \cup \{ \infty \} \subset S^3$ be a trivially embedded annulus
with an $\varepsilon$-framed unknot $c$ in $S^3$ 
as shown in the left side of Figure~\ref{fig:Def-AP}, 
where $\varepsilon = \pm 1$. 
Take an embedding of a band $b \colon I \times I \to S^3$ such that 
\begin{itemize}
\item $b(I \times I) \cap \partial A = b(\partial I \times I)$, 
\item $b(I \times I) \cap \text{int} A$ consists of ribbon singularities, and
\item $b(I \times I)  \cap c= \emptyset$,
\end{itemize}
where $I = [0,1]$. 
Throughout this paper, we assume that $A \cup b(I \times I)$ is orientable. 
This assumption implies that the induced framing is zero (see \cite{AJOT}). 
Unless otherwise stated, we also assume for simplicity that $\varepsilon=-1$. 
If a knot $K$ in $S^3$ is isotopic to the knot 
$\left( \partial A \setminus b(\partial I \times I)\right) \cup b( I \times \partial I)$ 
in $M_{c}(-1) \approx S^3$, 
then we say that $K$ admits an {\em annulus presentation} $(A,b,c)$.
It is easy to see that a knot admitting an annulus presentation 
is obtained from the Hopf link by a single band surgery (see~\cite{AJOT}). 
A typical example of a knot admitting an annulus presentation is given 
in Figure~\ref{fig:Def-AP}. 

\begin{figure}[!htb]
\centering
\begin{overpic}[width=1.0\textwidth]{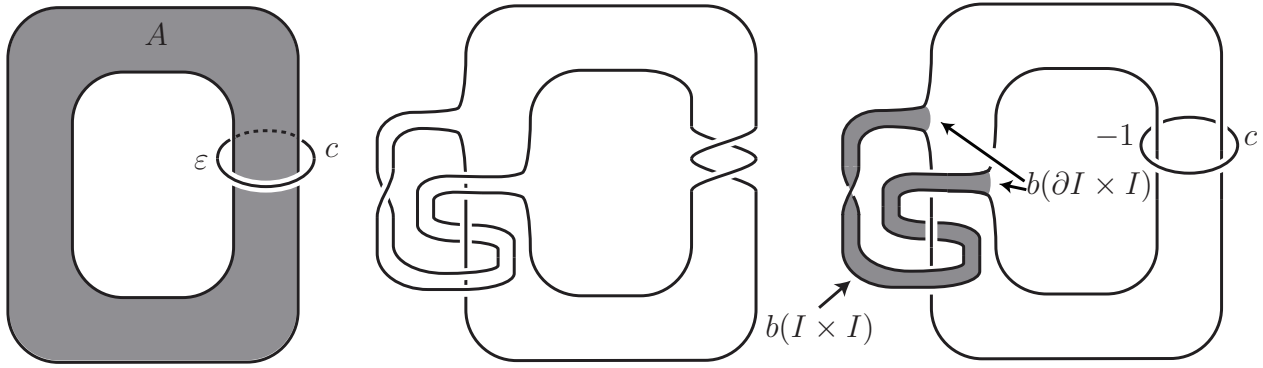}
\put(11.5,26){$A$}
\put(15.5,16){$\varepsilon$}
\put(26,17){$c$}
\put(61.5,2.5){$b(I \times I)$}
\put(82.5,14){$b(\partial I \times I)$}
\put(88,18){$-1$}
\put(100,18){$c$}
\end{overpic}
\caption{The knot depicted in the center admits an annulus presentation as in the right side.}
\label{fig:Def-AP}
\end{figure}

For an annulus presentation $(A,b,c)$, 
$\left( \R^2 \cup \{\infty\} \right) \setminus \textrm{int}A$ consists of 
two disks $D$ and $D'$, see Figure~\ref{fig:simple}. 
Assume that $\infty \in D'$. 

\begin{defn}\label{def:simple} 
An annulus presentation $(A,b,c)$ is called \textit{simple} if 
$b(I \times I) \cap \textrm{int} D = \emptyset$. 
\end{defn}

For example, in Figure~\ref{fig:simple}, 
the annulus presentation depicted in the center is simple, 
and the right one is not.

\begin{figure}[!htb]
\centering
\begin{overpic}[width=1.0\textwidth]{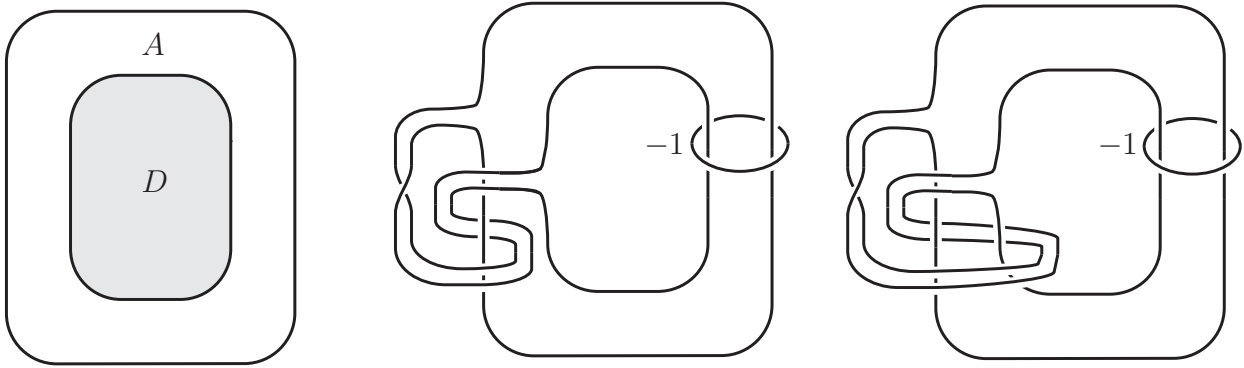}
\put(11,14){$D$}
\put(11,25){$A$}
\put(51.5,17){$-1$}
\put(88,17){$-1$}
\end{overpic}
\caption{The position of $D$, a simple annulus presentation and a non-simple annulus presentation.}
\label{fig:simple}
\end{figure}

Let $(A,b,c)$ be an annulus presentation of a knot. 
In a situation where it is inessential how the band $b(I \times I)$ is embedded, 
we often indicate $(A,b,c)$ in an abbreviated form as in Figure~\ref{fig:AP-abb}.

\begin{figure}[!htb]
\centering
\begin{overpic}[width=.25\textwidth]{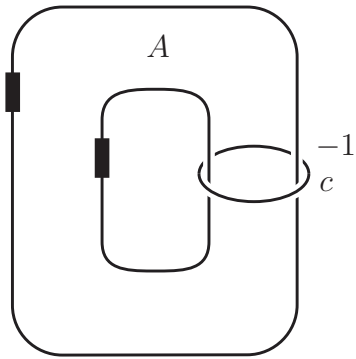}
\put(40,85){$A$}
\put(88,57){$-1$}
\put(89,47){$c$}
\end{overpic}
\caption{
Thick arcs stand for $b (\partial I \times I)$. }
\label{fig:AP-abb}
\end{figure}

\subsubsection{Operations}\label{sssec:operation}

To construct knots yielding the same $4$-manifold by a $2$-handle attaching, 
we define operations on an annulus presentation. 

\begin{defn}
Let $(A, b, c)$ be an annulus presentation, and $n$ an integer. 
\begin{itemize}
\item 
{\em The operation $(A)$} is to apply an annulus twist
along the annulus $A$. 
\item 
{\em The operation $(T_n)$} is defined as follows: 
\begin{enumerate}
\item 
Adding the $(-1/n)$-framed unknot as in Figure~\ref{fig:Tn}, and 
\item 
(after isotopy) blowing down along the $(-1/n)$-framed unknot. 
\end{enumerate}
\item 
{\em The operation $(*n)$} is the composition of $(A)$ and $(T_n)$. 
\end{itemize}
\end{defn}

In the operation $(T_n)$, 
the added $(-1/n)$-framed unknot is lying on the neighborhood of 
$c$ and $\partial A$, and does not intersect $b(I \times I)$. 
The intersection of $A$ and the added unknot is just one point. 

The operation $(*n)$ is a generalization of an annulus twist, in particular, $(*0) = (A)$.

\begin{figure}[!htb]
\centering
\begin{overpic}[width=.6\textwidth]{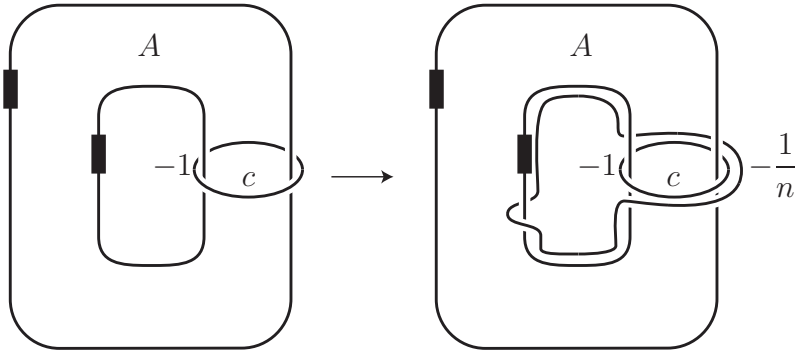}
\put(20,24){$-1$}
\put(77,24){$-1$}
\put(100,24){$-\dfrac1n$}
\put(18,40){$A$}
\put(76,40){$A$}
\put(32,22.5){$c$}
\put(89,22.5){$c$}
\end{overpic}
\caption{Add the $(-1/n)$-framed unknot in the operation $(T_n)$.}
\label{fig:Tn}
\end{figure}

\subsubsection{Construction}\label{sssec:Construction}

For a given knot $K$ with an annulus presentation, 
we can obtain a new knot $K'$ with a new annulus presentation 
by applying the operation $(*n)$. 
By abuse of notation, 
we call  $K'$ {\em the knot obtained from $K$ by the operation $(*n)$}. 
Here we give examples. 

\begin{ex}\label{ex:1}
Let $J_{0}$ be the knot with the simple annulus presentation of Figure~\ref{fig:*n}. 
Let $J_1$ be the knot obtained from $J_{0}$ by the operation $(*{n})$. 
Then $J_1$ is as in Figure~\ref{fig:*n}. 
\end{ex}
\begin{figure}[!htb]
\centering
\begin{overpic}[width=1.0\textwidth]{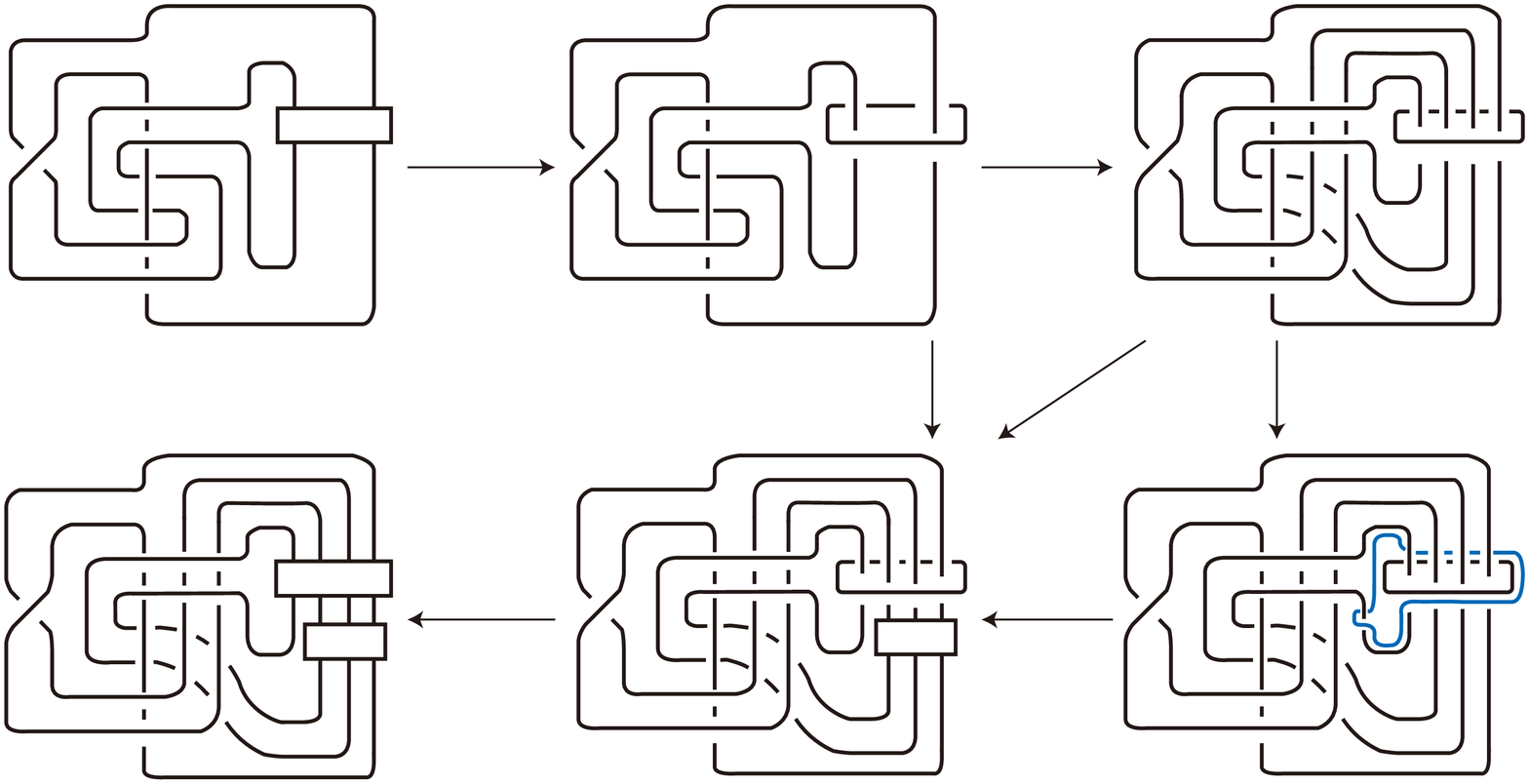}
\put(21.2,42.2){$1$}
\put(37,27.5){annulus presentation}
\put(37,-2){annulus presentation}
\put(26.5,41){blow up}
\put(62,44.5){$-1$}
\put(99,44.3){$-1$}
\put(62.3,14.5){$-1$}
\put(59.4,8.7){$n$}
\put(21.5,12.35){$1$}
\put(21.8,8.4){$n$}
\put(67,41){$(A)$}
\put(66,14.5){blow}
\put(66,12){down}
\put(29,14.5){blow}
\put(29,12){down}
\put(69.5,23){$(T_n)$}
\put(61.5,25.5){$(*n)$}
\put(11.5,27.5){$J_0$}
\put(11.5,-2.5){$J_1$}
\put(98.5,10){$\textcolor[cmyk]{1,0.5,0,0}{-\dfrac1n}$}
\end{overpic}
\vskip .3cm
\caption{By the operation $(*n)$, 
the knot $J_0$ with the annulus presentation is deformed into 
the knot $J_1$ with the annulus presentation.} 
\label{fig:*n}
\end{figure}

\begin{rem}
Let $K$ be a knot with an annulus presentation $(A, b, c)$, 
and $K'$ the knot obtained from $K$ by $(*n)$. 
If $(A, b, c)$ is simple, 
then the resulting annulus presentation of $K'$ is also simple. 
\end{rem}

\begin{ex}\label{ex:2}
For the knot $J_1$ in Example~\ref{ex:1} with $n = 1$, 
let $J_2$ be the knot obtained from $J_{1}$ by applying the operation $(*{1})$. 
Then $J_2$ is as in Figure~\ref{fig:ExK2}.
\end{ex}

\begin{figure}[!htb]
\centering
\begin{overpic}[width=\textwidth]{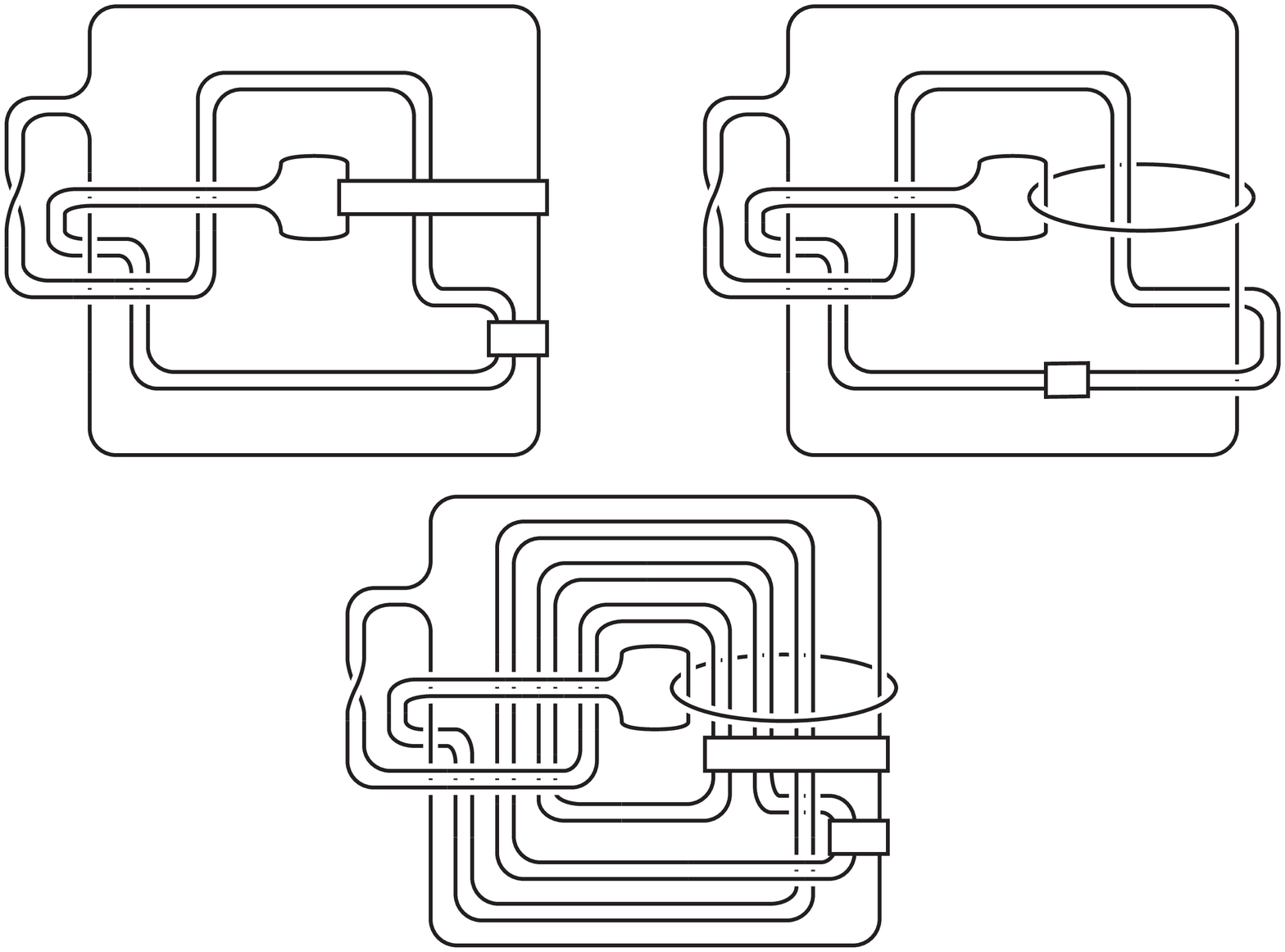}
\put(34,58){$1$}
\put(39.7,47){$1$}
\put(82.5,43.7){$1$}
\put(90,62){$-1$}
\put(70.5,20){$-1$}
\put(61.5,14.5){$1$}
\put(66.2,8){$1$}
\put(48,55){$=$}
\put(80,35.5){$J_1$}
\put(70,2){$J_2$}
\end{overpic}
\caption{An annulus presentation of the knot $J_2$ (lower half) obtained from $J_0$ 
by applying $(*1)$ two times. }
\label{fig:ExK2}
\end{figure}

The following lemma is obvious, however, important in our argument. 

\begin{lem}
Let $L$ be a $2$-component framed link 
which consists of $L_1$ with framing $(-1/n)$ and 
$L_2$ with framing $0$ as in the left side of Figure~\ref{fig:blow-down}. 
Suppose that the linking number of $L_1$ and $L_2$ is $\pm 1$ 
(with some orientation). 
Then the two Kirby diagrams in Figure~\ref{fig:blow-down} 
represent  the same $3$-manifold. 
\end{lem}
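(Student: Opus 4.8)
The plan is to recognize this as a standard Rolfsen twist / blow-down computation in Kirby calculus, and to carry it out by tracking how the framings and linking transform under the blow-down of the $(-1/n)$-framed unknot $L_1$. First I would set up notation: orient $L_1$ and $L_2$ so that their linking number is $+1$ (the case $-1$ is symmetric and handled by reversing an orientation). The key fact about blowing down a $(-1/n)$-framed unknot $U$ is that it introduces $n$ full twists (right-handed, with the sign convention fixed by $\varepsilon=-1$ and the right-handed orientation on $S^3$ from Remark~\ref{twisting}) in the strands of the other components passing through the spanning disk of $U$, and then deletes $U$ from the diagram. Since $L_2$ passes through this disk algebraically once (because $\mathrm{lk}(L_1,L_2)=\pm1$), the effect on $L_2$'s own framing must be computed via the standard slam-dunk/blow-down formula.

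Next I would compute the resulting framing on $L_2$. The standard formula for blowing down a $(-1/n)$-framed unknot linking a component $L_2$ with linking number $\ell$ is that the framing of $L_2$ changes by $+n\ell^2$ (the self-linking picks up $n$ times the square of the algebraic intersection with the disk). Here $\ell=\pm1$, so $\ell^2=1$, and the new framing on $L_2$ becomes $0 + n\cdot 1 = n$. This matches exactly what the right-hand Kirby diagram of Figure~\ref{fig:blow-down} should display: after the blow-down, $L_1$ disappears, $L_2$ acquires framing $n$, and $L_2$ acquires $n$ full twists in the band of strands that ran through the disk of $L_1$. I would present this as the content of the lemma and verify that the two diagrams are related precisely by this single Rolfsen twist move, hence represent the same $3$-manifold.

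The main technical point to get right—and what I expect to be the only real obstacle—is the sign and framing bookkeeping: confirming that it is $n$ positive full twists (not $-n$) and that the framing becomes exactly $n$ rather than, say, $-n$ or $n\mp$ something, given the specific orientation conventions fixed earlier (right-handed $S^3$, the labeling of $\partial A$ in Remark~\ref{twisting}, and $\varepsilon=-1$). I would pin this down by appealing directly to the Rolfsen twist formula for $\pm1/k$-framed unknots as in a standard reference (Gompf–Stipsicz), checking the convention on one unambiguous model link and then transporting it to the figure. Once the sign is fixed, the rest is the observation that a Rolfsen twist is a diffeomorphism of the surgered $3$-manifolds, so the two diagrams describe homeomorphic manifolds, completing the proof. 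Because the linking number is exactly $\pm1$, no extra strands with higher intersection number complicate the twisting, which is why the statement comes out this cleanly and the author calls it ``obvious.''
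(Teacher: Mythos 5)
Your proof is correct and is exactly the standard Rolfsen twist argument that the paper has in mind: the lemma is stated there without proof (the authors call it ``obvious''), and the intended justification is precisely the blow-down of a $(-1/n)$-framed unknot, which deletes $L_1$, inserts $n$ full twists in the strands through its spanning disk, and changes the framing of $L_2$ by $n\cdot\mathrm{lk}(L_1,L_2)^2 = n$. Your framing computation $0 + n\cdot(\pm1)^2 = n$ matches the right-hand diagram of Figure~\ref{fig:blow-down}, so there is nothing to add.
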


\begin{figure}[!htb]
\centering
\begin{overpic}[width=.4\textwidth]{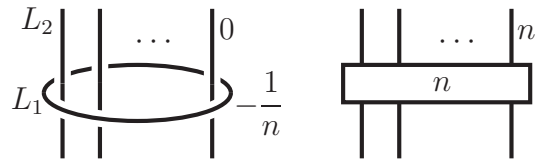}
\put(80,13.5){$n$}
\put(-5,10){$L_1$}
\put(-3,26){$L_2$}
\put(20,22){$\cdots$}
\put(81,22){$\cdots$}
\put(37,24){$0$}
\put(40,9){$-\dfrac1n$}
\put(97,24){$n$}
\end{overpic}
\caption{Two  Kirby diagrams represent  the same $3$-manifold. }
\label{fig:blow-down}
\end{figure}

\begin{thm} \label{thm:diffeo3}
Let $K$ be a knot with an annulus presentation 
and $K'$ be the knot obtained from $K$ by the operation $(*n)$.
Then 
\[ M_{K}(n) \approx M_{K'}(n) \, . \] 
\end{thm}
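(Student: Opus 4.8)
The plan is to track the $n$-surgered three-manifold through each of the two moves that constitute the operation $(*n)$, namely the annulus twist $(A)$ followed by the operation $(T_n)$, and show that the net effect on $M_K(n)$ is the identity. First I would set up the Kirby diagram for $M_K(n)$: since $K$ has annulus presentation $(A,b,c)$, the knot $K$ lives in $M_c(-1)\approx S^3$, so $M_K(n)$ is presented by the two-component framed link consisting of $c$ with framing $-1$ together with $K$ with framing $n$ (read in $S^3$ before blowing down $c$). The key observation is that the core circle of the annulus $A$ bounds a disk meeting the diagram in a controlled way, so that the annulus twist $(A)$ can itself be realized by adding and blowing down a framed unknot; concretely, an annulus twist along $A$ amounts to a $(\pm1)$-twist supported on this circle.

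The heart of the argument, and what I expect to be the main obstacle, is to see that applying $(T_n)$ after $(A)$ exactly undoes, at the level of the surgered manifold, the framing change that would otherwise distinguish $M_{K'}(n)$ from $M_K(n)$. Here I would lean on the Lemma immediately preceding the theorem: the added $(-1/n)$-framed unknot $L_1$ links the relevant $0$-framed component $L_2$ once, and the Lemma says that blowing down $L_1$ (absorbing it into $n$ right-handed full twists on the strands passing through $L_1$) yields the same three-manifold as replacing the pair by a single $n$-framed component. The geometric input that makes this applicable is exactly the stipulation, recorded just after the definition of $(*n)$, that the added $(-1/n)$-framed unknot lies in a neighborhood of $c\cup\partial A$, meets $A$ in a single point, and is disjoint from $b(I\times I)$; this guarantees the linking-number-one hypothesis of the Lemma and pins down precisely which strands receive the $n$ twists.

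Carrying this out, I would first draw $M_K(n)$ as the framed link $c(-1)\cup K(n)$, then perform the annulus twist $(A)$, recording that in the presence of the $n$-framing on $K$ this twist may be encoded by a blow-up/blow-down supported on the core of $A$; next I would introduce the $(-1/n)$-framed unknot of step (1) of $(T_n)$, verify via the disjointness and single-intersection conditions above that it has linking number $\pm1$ with the appropriate $0$-framed circle, and then invoke the Lemma to blow it down in step (2). The bookkeeping of framings is where care is needed: I must check that the framing of $K'$ coming out of the construction is again $n$, and that the two blow-downs (the one implementing $(A)$ and the one in $(T_n)$) combine to leave the ambient $S^3$ and the surgery coefficient unchanged. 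Once the sequence of Kirby moves is assembled, the equality $M_K(n)\approx M_{K'}(n)$ follows because every move in the chain is a diffeomorphism (indeed a handle slide or blow-down) of the surgery presentations, so the resulting closed three-manifolds are homeomorphic. I would conclude by remarking that the same diagram chase is the starting point for the stronger four-dimensional statement of Theorem~\ref{thm:diffeo4}, where one must additionally track the two-handle and hence work with the simple annulus presentation hypothesis.
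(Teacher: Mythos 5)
Your overall strategy---track $M_K(n)$ through the two constituent moves of $(*n)$ by Kirby calculus and finish with the blow-down Lemma (Lemma~3.9, Figure~\ref{fig:blow-down})---is the same strategy the paper uses, but two of your concrete steps do not go through as described. First, the annulus twist $(A)$ is \emph{not} realized by a single $(\pm 1)$-twist supported on the core circle of $A$: twisting along a disk bounded by the core is a full twist on every strand through that disk, which is a different operation from the annulus twist. In the paper (the red curves in Figures~\ref{fig:HM1} and~\ref{fig:HM3}) the operation $(A)$ is implemented by a \emph{pair} of unknots with framings $+1$ and $-1$ running along the two boundary components of $A$ with the $0$-framing induced by $A$, and the framing corrections coming from blowing down these two circles are precisely what the rest of the argument has to control. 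Getting this wrong derails the framing bookkeeping that you correctly identify as the delicate point.

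Second, and more seriously, your order of operations leaves the Lemma inapplicable at the moment you want to invoke it. The Lemma requires the component linked once by the $(-1/n)$-framed unknot to be $0$-framed, but after performing $(A)$ directly on the diagram $c(-1)\cup K(n)$ neither component is $0$-framed, so there is no ``appropriate $0$-framed circle.'' The paper resolves this by reversing the order: it \emph{begins} by trading the $n$-framing on $K$ for a $0$-framing together with a $(-1/n)$-framed meridian (the initial ``blow up'' in Figure~\ref{fig:HM1}), then performs the handle slides and $\pm1$ blow-ups/downs realizing $(A)$ on the now $0$-framed component, and shows by an explicit diagram chase that the $(-1/n)$ curve is carried to exactly the position prescribed in the definition of $(T_n)$ (meeting $A$ once, lying near $c\cup\partial A$, disjoint from the band); only then does the Lemma apply and return the framing $n$ on $K'$. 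That the meridian lands in the $(T_n)$ position is the actual mathematical content of the theorem---it is why $(T_n)$ is defined the way it is---and it is the step your proposal names as ``the main obstacle'' but does not supply. As written, the assertion that $(T_n)$ ``exactly undoes the framing change'' is the conclusion restated, not an argument for it.
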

\begin{proof}
First, 
we consider the case where $K = J_0 = 8_{20}$ with the usual annulus presentation 
as in Figure~\ref{fig:*n}. 
Figure~\ref{fig:HM1} shows that 
$M_{K}(n) $ is represented by the last diagram in Figure~\ref{fig:HM1}, 
and this is diffeomorphic to $M_{K'}(n)$ by Figure~\ref{fig:HM2}. 
The moves in Figure~\ref{fig:HM2} correspond to the operation $(*n)$.

\begin{figure}[!htb]
\centering
\begin{overpic}[width=\textwidth]{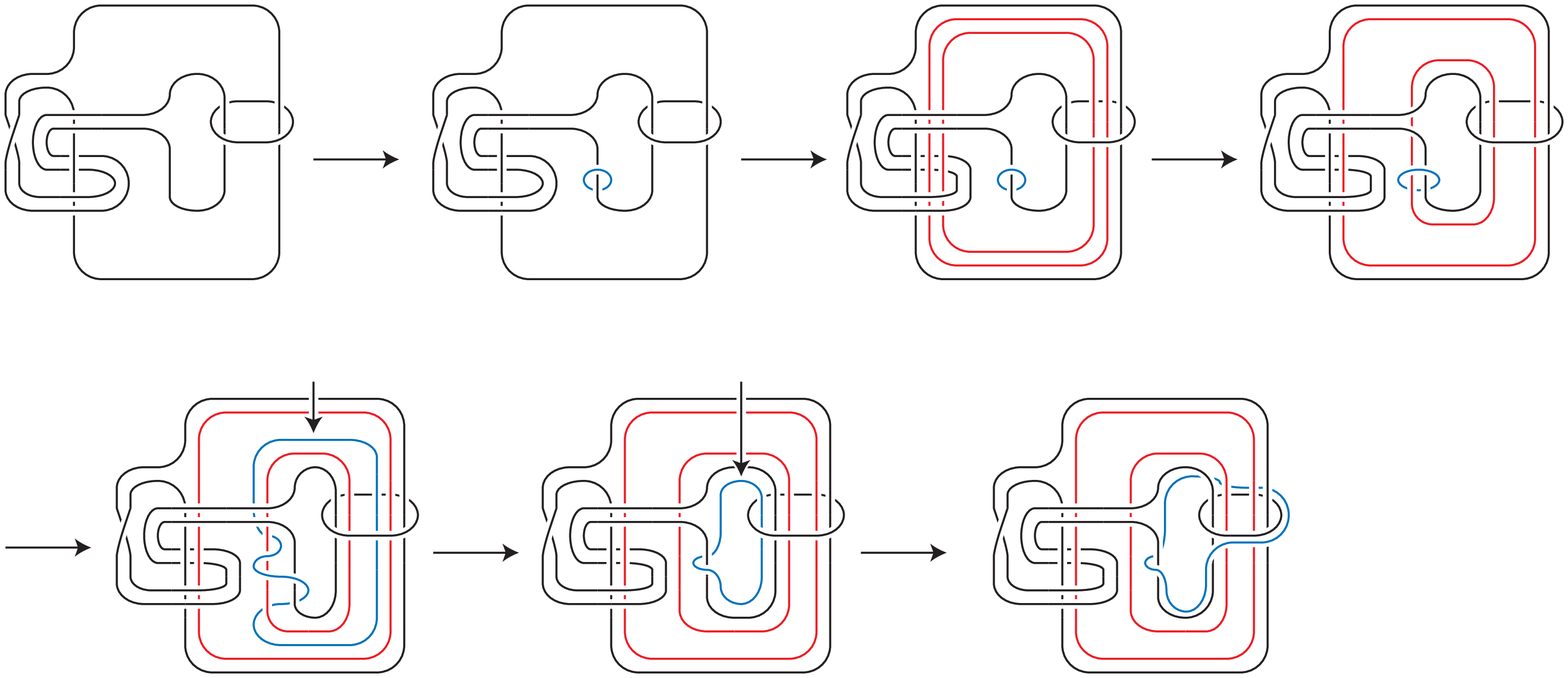}
\put(18.5,36){$-1$}
\put(46,36){$-1$}
\put(72.2,36){$-1$}
\put(99.5,36){$-1$}
\put(26.5,10.7){$-1$}
\put(53.7,10.7){$-1$}
\put(74.4,8){{\footnotesize $-1$}}
\put(18,42){$n$}
\put(45,42){$0$}
\put(71.5,42){$0$}
\put(99,42){$0$}
\put(26,16.5){$0$}
\put(53.3,16.5){$0$}
\put(81.2,16.5){$0$}
\put(34.5,28.5){\textcolor[cmyk]{1,0.5,0,0}{$-\frac{1}{n}$}}
\put(61,28.5){\textcolor[cmyk]{1,0.5,0,0}{$-\frac{1}{n}$}}
\put(86.5,28.5){\textcolor[cmyk]{1,0.5,0,0}{$-\frac{1}{n}$}}
\put(17,19.5){\textcolor[cmyk]{1,0.5,0,0}{$-1-\frac{1}{n}$}}
\put(45,19.5){\textcolor[cmyk]{1,0.5,0,0}{$-1-\frac{1}{n}$}}
\put(82.5,10){\textcolor[cmyk]{1,0.5,0,0}{$-\frac{1}{n}$}}
\put(58,37.2){\textcolor[cmyk]{0,1,1,0}{$1$}}
\put(60.5,37.2){\textcolor[cmyk]{0,1,1,0}{$-1$}}
\put(86,40){\textcolor[cmyk]{0,1,1,0}{$-1$}}
\put(95.4,30){\textcolor[cmyk]{0,1,1,0}{$1$}}
\put(13,15){\textcolor[cmyk]{0,1,1,0}{$-1$}}
\put(22.5,4){\textcolor[cmyk]{0,1,1,0}{$1$}}
\put(40.3,15){\textcolor[cmyk]{0,1,1,0}{$-1$}}
\put(50.5,4){\textcolor[cmyk]{0,1,1,0}{$1$}}
\put(69,15){\textcolor[cmyk]{0,1,1,0}{$-1$}}
\put(78.5,4){\textcolor[cmyk]{0,1,1,0}{$1$}}
\put(20,33.8){blow}
\put(21,31.5){up}
\put(73.8,33.8){slide}
\put(.5,9){slide}
\put(27,9){{\small isotopy}}
\put(55,8.8){slide}
\end{overpic}
\caption{A proof of  $M_{K}(n) \approx M_{K'}(n)$
when $K = 8_{20}$.}
\label{fig:HM1}
\end{figure}
\begin{figure}[!htb]
\centering
\begin{overpic}[width=.9\textwidth]{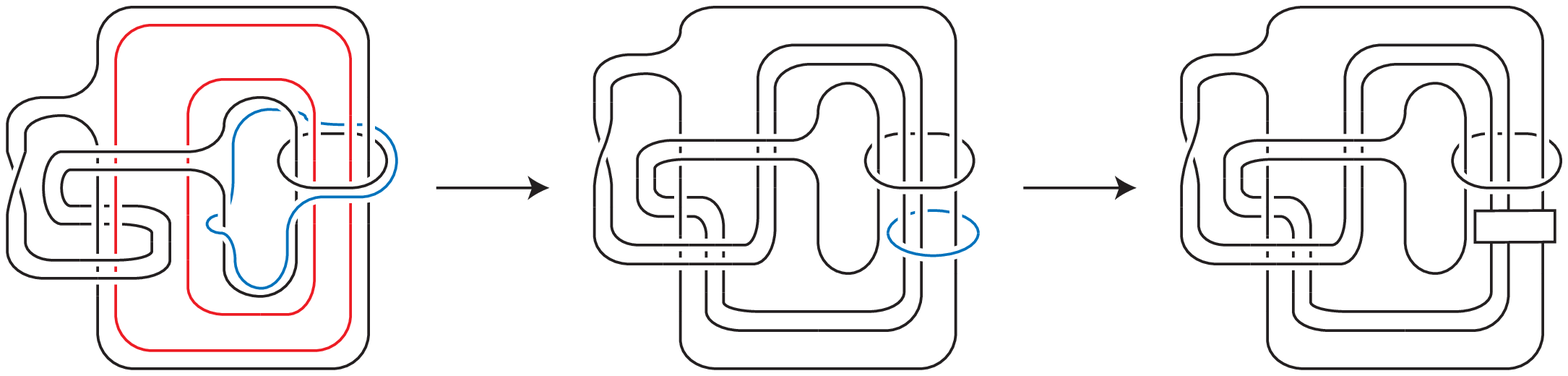}
\put(24,22){$0$}
\put(61,22){$0$}
\put(98.5,22){$n$}
\put(24,16){\textcolor[cmyk]{1,0.5,0,0}{$-\frac{1}{n}$}}
\put(62,7){\textcolor[cmyk]{1,0.5,0,0}{$-\frac{1}{n}$}}
\put(8,20){\textcolor[cmyk]{0,1,1,0}{$-1$}}
\put(20,7.2){\textcolor[cmyk]{0,1,1,0}{$1$}}
\put(15.2,11){{\footnotesize $-1$}}
\put(61.5,14.5){$-1$}
\put(99,14.5){$-1$}
\put(96,8.8){$n$}
\put(65.5,12.5){blow}
\put(65.5,9.7){down}
\end{overpic}
\caption{Moves which correspond to the operation $(*n)$.}
\label{fig:HM2}
\end{figure}

Next we consider the general  case.
Let $(A, b, c)$ be an annulus presentation of $K$. 
As seen in Figure~\ref{fig:HM3}, 
$M_{K}(n) $ is represented by the last diagram in Figure~\ref{fig:HM3}. 
Now it is not difficult to see that this is diffeomorphic to $M_{K'}(n)$. 
\begin{figure}[!htb]
\centering
\begin{overpic}[width=\textwidth]{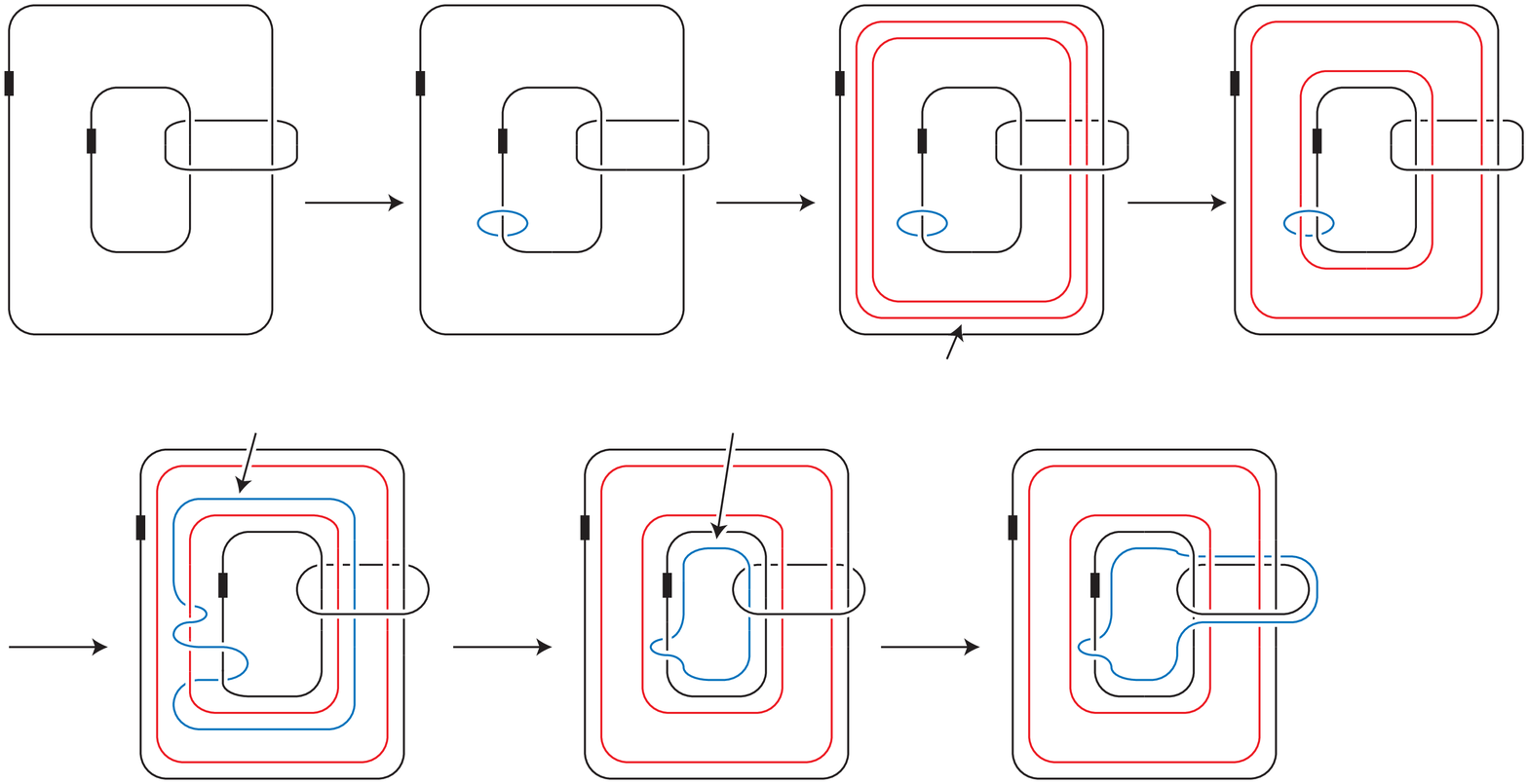}
\put(17.8,50){$n$}
\put(19.5,41.5){$-1$}
\put(44.8,50){$0$}
\put(46.8,41.5){$-1$}
\put(72.5,50){$0$}
\put(74.2,41.5){$-1$}
\put(98.5,50){$0$}
\put(100,41.5){$-1$}
\put(26.5,21){$0$}
\put(28,12){$-1$}
\put(55.5,21){$0$}
\put(57,12){$-1$}
\put(83.5,21){$0$}
\put(73.5,12){$-1$}
\put(34.5,36.5){\textcolor[cmyk]{1,.5,0,0}{$-\frac{1}{n}$}}
\put(62,36.5){\textcolor[cmyk]{1,.5,0,0}{$-\frac{1}{n}$}}
\put(87.3,36.5){\textcolor[cmyk]{1,.5,0,0}{$-\frac{1}{n}$}}
\put(12,24){\textcolor[cmyk]{1,.5,0,0}{$-1-\frac{1}{n}$}}
\put(43,24){\textcolor[cmyk]{1,.5,0,0}{$-1-\frac{1}{n}$}}
\put(86.5,12.5){\textcolor[cmyk]{1,.5,0,0}{$-\frac{1}{n}$}}
\put(61,32){\textcolor[cmyk]{0,1,1,0}{$-1$}}
\put(60.7,27){\textcolor[cmyk]{0,1,1,0}{$1$}}
\put(93,32){\textcolor[cmyk]{0,1,1,0}{$1$}}
\put(82.2,31){\textcolor[cmyk]{0,1,1,0}{$-1$}}
\put(12.6,15){\textcolor[cmyk]{0,1,1,0}{$1$}}
\put(10.5,1.7){\textcolor[cmyk]{0,1,1,0}{$-1$}}
\put(40.5,15){\textcolor[cmyk]{0,1,1,0}{$1$}}
\put(39.8,1.7){\textcolor[cmyk]{0,1,1,0}{$-1$}}
\put(68.6,15){\textcolor[cmyk]{0,1,1,0}{$1$}}
\put(67.8,1.7){\textcolor[cmyk]{0,1,1,0}{$-1$}}
\put(20,35.8){blow}
\put(20,33.5){up}
\put(74,35.8){slide}
\put(1,6.5){slide}
\put(29,6.5){isotopy}
\put(58,6.5){slide}
\end{overpic}
\caption{A proof of  $M_{K}(n) \approx M_{K'}(n)$ for the general case.}
\label{fig:HM3}
\end{figure}
\end{proof}

\begin{rem}\label{rem:simple_annulus_presentation}
Let $K$ be a knot with an annulus presentation $(A, b, c)$
and $K'$ be the knot obtained from $K$ by the operation ($*n$).
In general, $K'$ is much more complicated than $K$. 
If the annulus presentation $(A, b, c)$ is simple, then $K'$ is not too complicated. 
Indeed, let $(A, b_A, c)$ be the annulus presentation 
obtained from $(A, b, c)$ by applying the operation $(A)$ 
as in the left side of Figure~\ref{fig:RemOperation}. 
Then the knot $K'$ is indicated as in the right side of Figure~\ref{fig:RemOperation}.
\end{rem}

\begin{figure}[!htb]
\centering
\begin{overpic}[width=.7\textwidth]{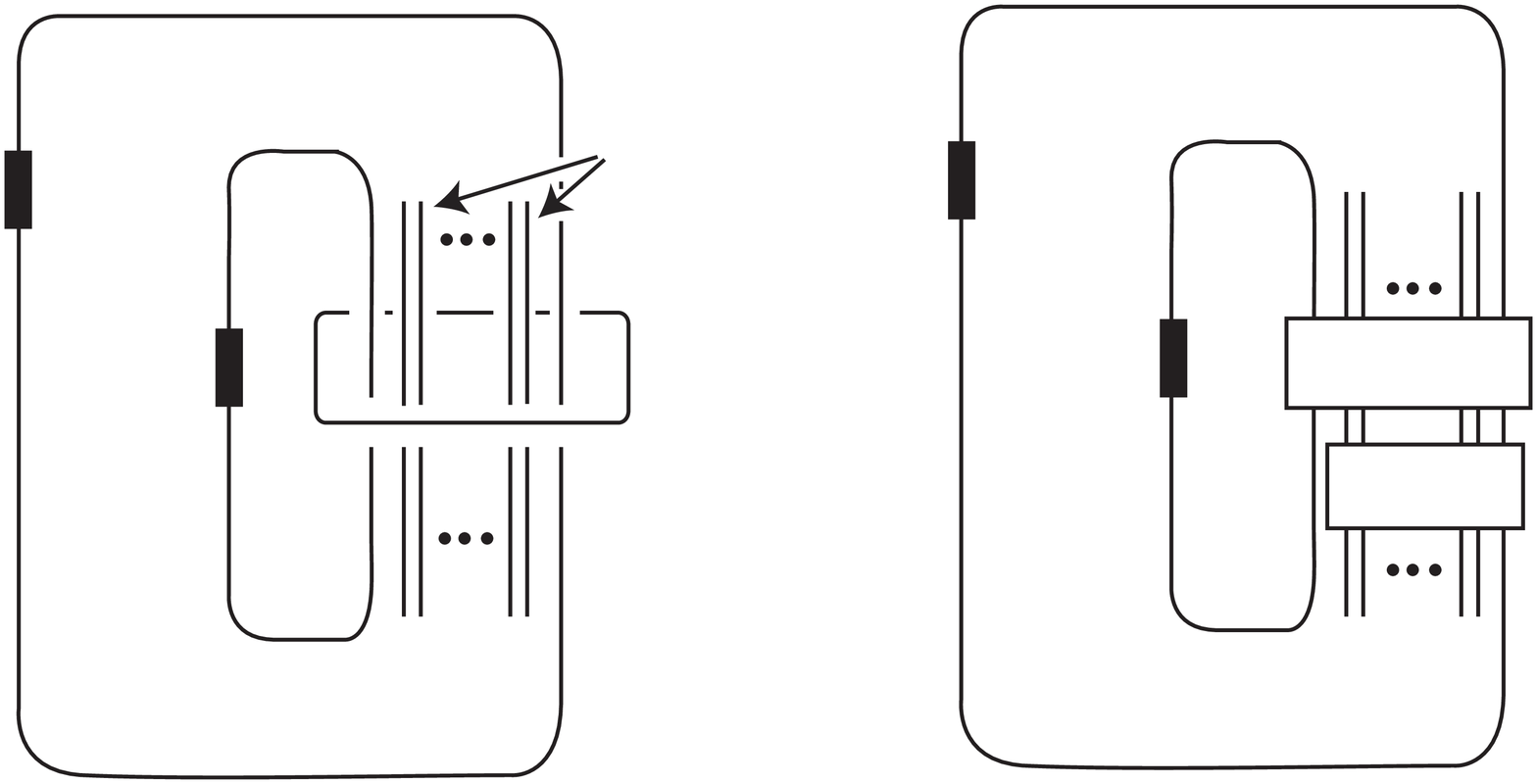}
\put(39.5,41){$b(I \times I)$}
\put(40.5,21){$c$}
\put(41.5,27){$-1$}
\put(89.5,26){$-1$}
\put(92,18.5){$n$}
\end{overpic}
\caption{The annulus presentation $(A, b_A, c)$ and the knot $K'$.}
\label{fig:RemOperation}
\end{figure}

\subsection{Extension of a diffeomorphism between 3-manifolds}\label{ssec:extension} 

In his seminal work, Cerf~\cite{Cerf} proved that 
any orientation preserving self diffeomorphism of $S^3$
extends to a self diffeomorphism of $B^4$.
As an application, Akbulut obtained the following lemma. 

\begin{lem}[\cite{Ak2}]\label{lem:extend}
Let $K$ and $K'$ be knots in $S^3 = \partial D^4$ 
with a diffeomorphism $g \colon \partial X_{K}(n) \to \partial X_{K'}(n)$, 
and let $\mu$ be a meridian of $K$. 
Suppose that
\begin{enumerate}
\item 
if $\mu$ is $0$-framed, then $g(\mu)$ is the $0$-framed unknot in the Kirby diagram representing $X_{K'}(n)$, and 
\item 
the Kirby diagram $X_{K'}(n) \cup h^1$  represents $D^4$, where $h^1$ is the $1$-handle represented by 
$g(\mu)$. 
\end{enumerate}
Then $g$ extends to a diffeomorphism $\widetilde{g} \colon X_{K}(n) \to X_{K'}(n)$ 
such that $\tilde{g}|_{\partial X_{K}(n)}=g$. 
\end{lem}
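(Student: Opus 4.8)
The plan is to reduce the extension problem to Cerf's theorem by capping off both $X_K(n)$ and $X_{K'}(n)$ with canceling $1$-handles so that each becomes diffeomorphic to $D^4$, extending $g$ across these handles to a self-diffeomorphism of $S^3$, and then invoking Cerf to fill the result in over $D^4$. The last task is then to peel the $1$-handles back off in a way compatible with $g$.

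First I would form the closed-up manifolds. On the $K$-side, dot the meridian $\mu$ to regard it as a $1$-handle $h^1$; since $K$ links $\mu$ geometrically once, the attaching circle of the $2$-handle of $X_K(n)$ runs over $h^1$ exactly once, so $h^1$ cancels that $2$-handle and $X_K(n) \cup h^1 \approx D^4$ (independently of the framing $n$). On the $K'$-side, hypothesis~(2) gives directly that $X_{K'}(n) \cup h^1 \approx D^4$, where now $h^1$ is the $1$-handle obtained by dotting $g(\mu)$.

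Next I would extend $g$ over the handles. Attaching $h^1$ modifies $\partial X_K(n)$ by the surgery on $\mu$ prescribed by the dotted-circle convention; by hypothesis~(1) the curve $g(\mu)$ is a $0$-framed unknot, so it carries exactly the matching surgery data, and since $g$ takes the framed curve $\mu$ to the framed curve $g(\mu)$ it extends across the two surgeries to a diffeomorphism $g' \colon \partial(X_K(n) \cup h^1) \to \partial(X_{K'}(n) \cup h^1)$. By the previous paragraph both of these boundaries are $S^3$, so $g'$ is an orientation-preserving self-diffeomorphism of $S^3$ (we take $g$, hence $g'$, orientation preserving). Cerf's theorem then extends $g'$ to a diffeomorphism $G \colon D^4 \to D^4$, that is, $G \colon X_K(n) \cup h^1 \to X_{K'}(n) \cup h^1$ restricting to $g'$ on the boundary.

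Finally I would remove the $1$-handles. Since $g'$ already carries the attaching/belt data of $h^1$ onto that of $h^1_{g(\mu)}$, an isotopy of $G$ should make $G$ carry the standard $1$-handle $h^1 \cong D^1 \times D^3$ onto $h^1_{g(\mu)}$; deleting the handles then yields $\widetilde{g} := G|_{X_K(n)} \colon X_K(n) \to X_{K'}(n)$ with $\widetilde{g}|_{\partial X_K(n)} = g$. I expect this last compatibility step to be the main obstacle: one must arrange the Cerf extension to respect the handle structure while leaving its restriction to $\partial X_K(n)$ \emph{equal} to $g$, rather than merely diffeomorphic to it. The handle cancellation and the reduction to Cerf are otherwise routine, so the real content of the argument lies in this final handle-matching and, in applications, in verifying hypotheses~(1) and~(2).
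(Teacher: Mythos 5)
Your argument is the standard ``carving'' proof and is essentially the one the paper relies on: the paper itself gives no proof, deferring to Akbulut and to \cite[Lemma 2.9]{AJOT}, where exactly this reduction is carried out --- cancel the $2$-handle of $X_K(n)$ against the $1$-handle dotted on $\mu$, use hypotheses (1) and (2) to get matching copies of $D^4$ and to extend $g$ over the corresponding $0$-surgeries, and invoke Cerf. The final step you flag as delicate is handled there by viewing the $1$-handles as carved-out meridian-disk neighborhoods and re-gluing them as $2$-handles along the framed belt spheres, over which the Cerf extension $G$ extends compatibly with $g$; your outline is correct.
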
 

This technique is called ``carving'' in~\cite{AkbulutBook}. 
For a proof, we refer the reader to \cite[Lemma 2.9]{AJOT}. 
Applying Lemma~\ref{lem:extend}, we show the following.

\begin{thm}\label{thm:diffeo4} 
Let $K$ be a knot with a simple annulus presentation 
and $K'$ be the knot obtained from $K$ by the operation $(*n)$. 
Then $X_{K}(n) \approx X_{K'}(n)$.  
\end{thm}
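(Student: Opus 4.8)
The plan is to promote the boundary diffeomorphism furnished by Theorem~\ref{thm:diffeo3} to a diffeomorphism of the $4$-manifolds by means of Akbulut's carving lemma (Lemma~\ref{lem:extend}). Theorem~\ref{thm:diffeo3} already gives $M_K(n)\approx M_{K'}(n)$; more importantly, its proof produces an \emph{explicit} diffeomorphism $g\colon \partial X_K(n)\to\partial X_{K'}(n)$ realized by the sequence of Kirby moves in Figure~\ref{fig:HM3} (blow up, handle slides, isotopy, blow down). To feed this $g$ into Lemma~\ref{lem:extend} I must identify the image under $g$ of a $0$-framed meridian $\mu$ of $K$ and then verify the lemma's two hypotheses for it.

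First I would enlarge the initial Kirby diagram for $X_K(n)$ by drawing the $0$-framed meridian $\mu$ of $K$; this is a cancelling circle, so that the $1$-handle dual to $\mu$ turns $X_K(n)$ into $D^4$. I then carry $\mu$ through exactly the same moves used in the proof of Theorem~\ref{thm:diffeo3}, recording the curve $g(\mu)$ that it becomes in the final diagram representing $X_{K'}(n)$. Since each move is a blow up, blow down, handle slide, or isotopy, both the framing and the isotopy class of $g(\mu)$ are determined by this bookkeeping; the outcome I aim for is that $g(\mu)$ is again a $0$-framed unknot, which is precisely hypothesis~(1) of Lemma~\ref{lem:extend}.

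The heart of the argument is hypothesis~(2): that $X_{K'}(n)\cup h^1$ represents $D^4$, where $h^1$ is the $1$-handle carried by $g(\mu)$. Here the assumption that $(A,b,c)$ is \emph{simple} (Definition~\ref{def:simple}) is exactly what makes the tracking tractable. By Remark~\ref{rem:simple_annulus_presentation}, when the presentation is simple the knot $K'$ and its annulus presentation are given by the controlled local picture of Figure~\ref{fig:RemOperation}, so the band never enters the inner disk $D$ and the operation $(*n)$ is supported in a neighborhood of $A\cup c$ together with the added $(-1/n)$-framed unknot of Figure~\ref{fig:Tn}. In that picture I expect $g(\mu)$ to emerge as a geometric meridian of $K'$, i.e.\ an unknot linking the $2$-handle on $K'$ exactly once; the resulting $1$-handle/$2$-handle pair then cancels and $X_{K'}(n)\cup h^1$ collapses to $D^4$, establishing hypothesis~(2).

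Once both hypotheses are checked, Lemma~\ref{lem:extend} extends $g$ to a diffeomorphism $\widetilde{g}\colon X_K(n)\to X_{K'}(n)$, which is exactly the conclusion. The step I expect to be the main obstacle is verifying hypothesis~(2): one has to be sure that, after pushing $\mu$ through the entire move sequence, the resulting curve $g(\mu)$ is genuinely a geometric meridian of $K'$ rather than some more complicated unknot that merely links $K'$ algebraically once. Simplicity of $(A,b,c)$ is precisely the hypothesis that prevents the band from tangling the meridian as it passes through the operation, so the bulk of the careful diagrammatic work lies in confirming that this cancellation survives in the general simple case, and not merely for $K = 8_{20}$.
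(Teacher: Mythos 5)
Your proposal follows the paper's proof essentially exactly: take the explicit boundary diffeomorphism produced in the proof of Theorem~\ref{thm:diffeo3}, track a $0$-framed meridian $\mu$ through the Kirby moves to verify hypothesis (1) of Lemma~\ref{lem:extend}, and then verify hypothesis (2) so that the carving lemma extends the diffeomorphism over the $4$-manifolds. The one place where the paper's resolution differs from what you anticipate is that $g(\mu)$ is \emph{not} a geometric meridian of $K'$ (it is the $0$-framed unknot shown in Figures~\ref{fig:Proof4-1} and \ref{fig:Proof4-3}), and the paper obtains $W = D^4\cup h^1\cup h^2 \approx B^4$ only after one handle slide of $h^2$ over $h^1$ (changing the framing from $n$ to $n-2$), which then produces a geometrically canceling pair -- so the cancellation you flag as the main obstacle does go through, just after one extra slide.
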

\begin{proof}
First, 
we consider the case where $K = 8_{20}$ with the usual simple annulus presentation.
Let $f \colon \partial X_{K}(n) \to  \partial X_{K'}(n)$ 
be the diffeomorphism given in Figures~\ref{fig:HM1} 
and \ref{fig:HM2}. 
Let $\mu$ be the meridian of $K$. 
If we suppose that $\mu$ is $0$-framed, 
then we can check that $f(\mu)$ is the $0$-framed unknot 
in the Kirby diagram of $X_{K'}(0)$ as in Figure~\ref{fig:Proof4-1}. 
Let $W$ be the $4$-manifold $D^4 \cup h^1 \cup h^2$, 
where $h^1$ is the dotted $1$-handle represented by $f(\mu)$ and $h^2$ 
is the $2$-handle represented by $K'$ with framing $n$.
Sliding $h^2$ over $h^1$, we obtain a canceling pair 
(see Figure~\ref{fig:Proof4-2}), implying that $W \approx B^4$.
By Lemma~\ref{lem:extend}, we have $\tilde{f} \colon X_{K}(0) \xrightarrow{\approx} X_{K'}(0)$. 

Next, we consider the general case.
Let $g \colon \partial X_{K}(n) \to  \partial X_{K'}(n)$ 
be the diffeomorphism given in the proof of Theorem~\ref{thm:diffeo3} in the general case  (see Figure~\ref{fig:Proof4-3}), 
and $\mu$ the meridian of $\partial X_{K}(n)$. 
In Figure~\ref{fig:Proof4-3}, 
the annulus presentation in the right hand side represents $K'$, 
see Remark~\ref{rem:simple_annulus_presentation}. 
If we suppose that $\mu$ is $0$-framed, 
then we can check that $g(\mu)$ is the $0$-framed unknot 
in the Kirby diagram of $X_{K'}(0)$ as in Figure~\ref{fig:Proof4-3}. 
Let $W$ be the $4$-manifold $D^4 \cup h^1 \cup h^2$, 
where $h^1$ is the dotted $1$-handle represented by $g(\mu)$ and $h^2$ 
is the $2$-handle represented by $K'$ with framing $n$. 
Sliding $h^2$ over $h^1$, we obtain a canceling pair 
(see Figure~\ref{fig:Proof4-4}), implying that $W \approx B^4$.
By Lemma~\ref{lem:extend} again, we have $\tilde{g} \colon X_{K}(0) \xrightarrow{\approx} X_{K'}(0)$.
\end{proof}

\begin{figure}[!htb]
\centering
\begin{overpic}[width=.75\textwidth]{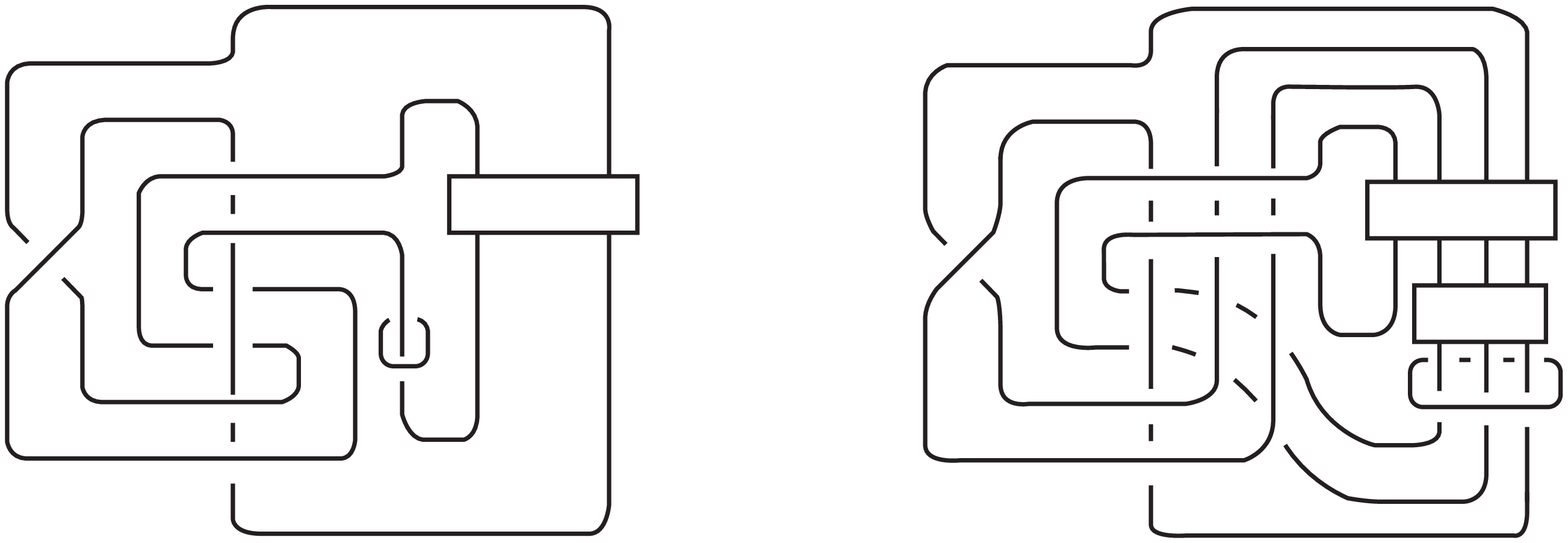}
\put(27.5,13){$0$}
\put(27,10){$\mu$}
\put(34,20.3){$1$}
\put(93,20){$1$}
\put(94,13.6){$n$}
\put(100,11){$0$}
\put(99,6){$f(\mu)$}
\put(48,15){$\approx$}
\put(48.5,17.7){$f$}
\end{overpic}
\caption{The image of $\mu$ under $f$.}
\label{fig:Proof4-1}
\end{figure}

\begin{figure}[!htb]
\centering
\begin{overpic}[width=.75\textwidth]{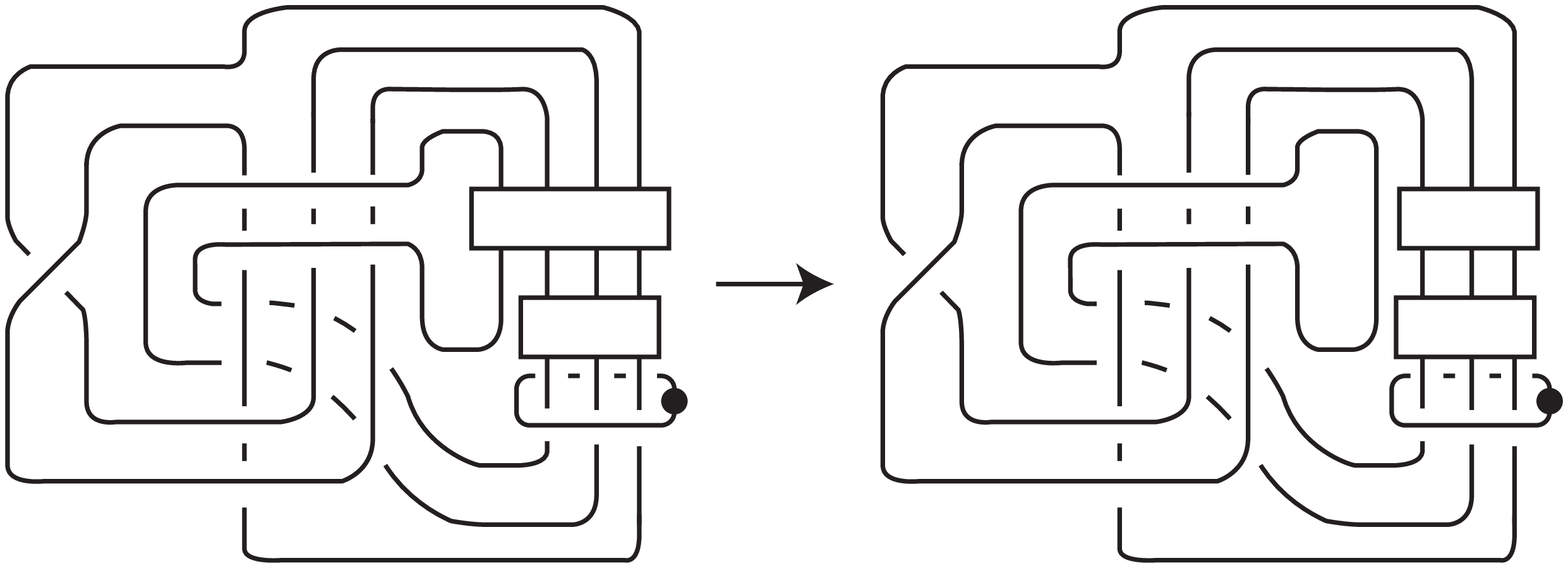}
\put(35.8,21){$1$}
\put(37,14.5){$n$}
\put(40.5,35){$n$}
\put(97,35){$n-2$}
\put(93.3,21){$1$}
\put(93,14.5){$n$}
\put(103,17){$\approx$}
\put(107,17){$B^4$}
\end{overpic}
\caption{The $4$-manifold $W$ is diffeomorphic to $B^4$.}
\label{fig:Proof4-2}
\end{figure}

\begin{figure}[!htb]
\centering
\begin{overpic}[width=.5\textwidth]{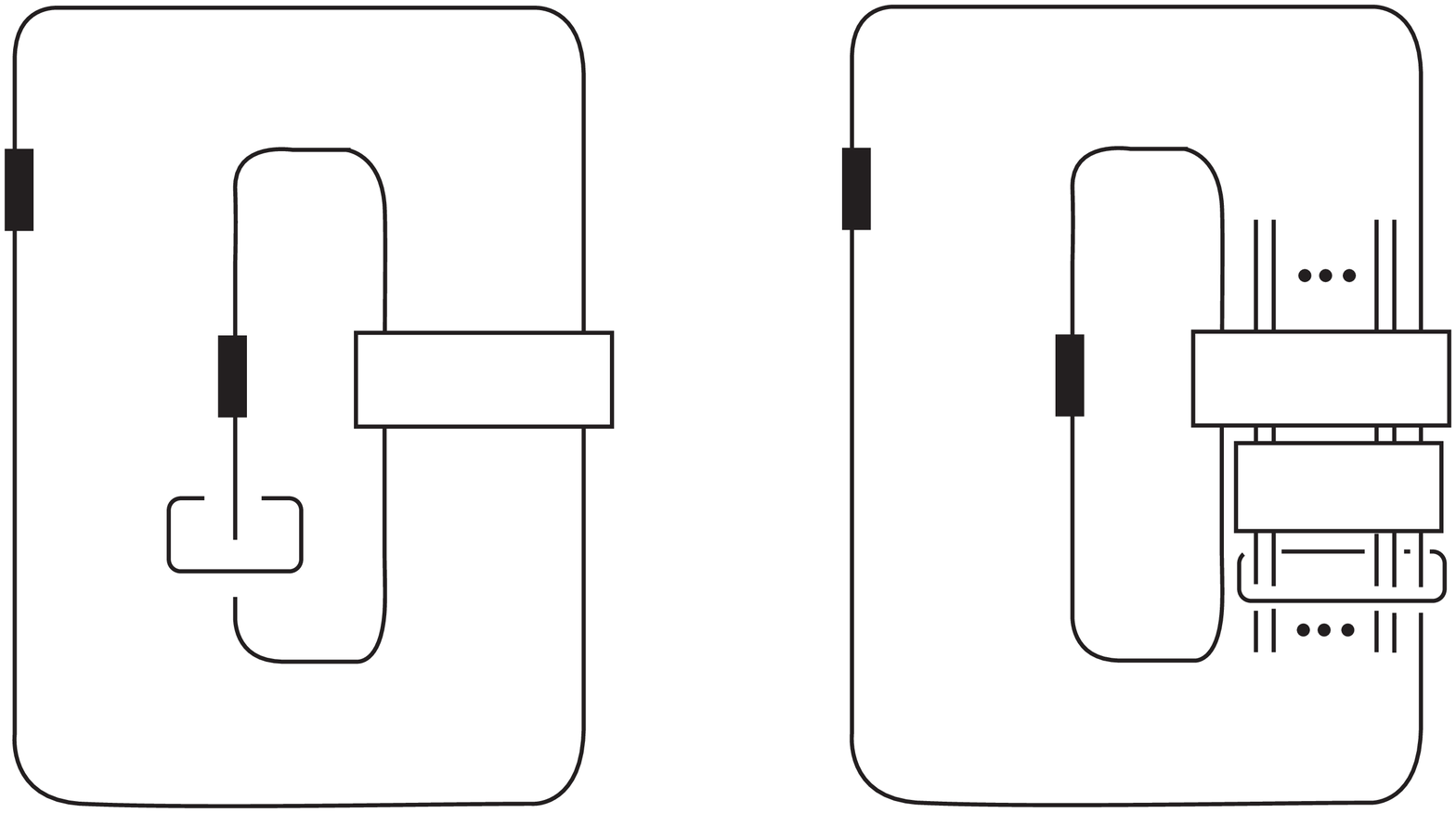}
\put(21,20){$0$}
\put(20.7,15){$\mu$}
\put(32.5,28){$1$}
\put(40,53){$n$}
\put(100.2,16.7){$0$}
\put(100,11.7){$g(\mu)$}
\put(90,28){$1$}
\put(91,21){$n$}
\put(98,53){$n$}
\put(48,27){$\approx$}
\end{overpic}
\caption{The image of $\mu$ under $g$.}
\label{fig:Proof4-3}
\end{figure}

\begin{figure}[!htb]
\centering
\begin{overpic}[width=.5\textwidth]{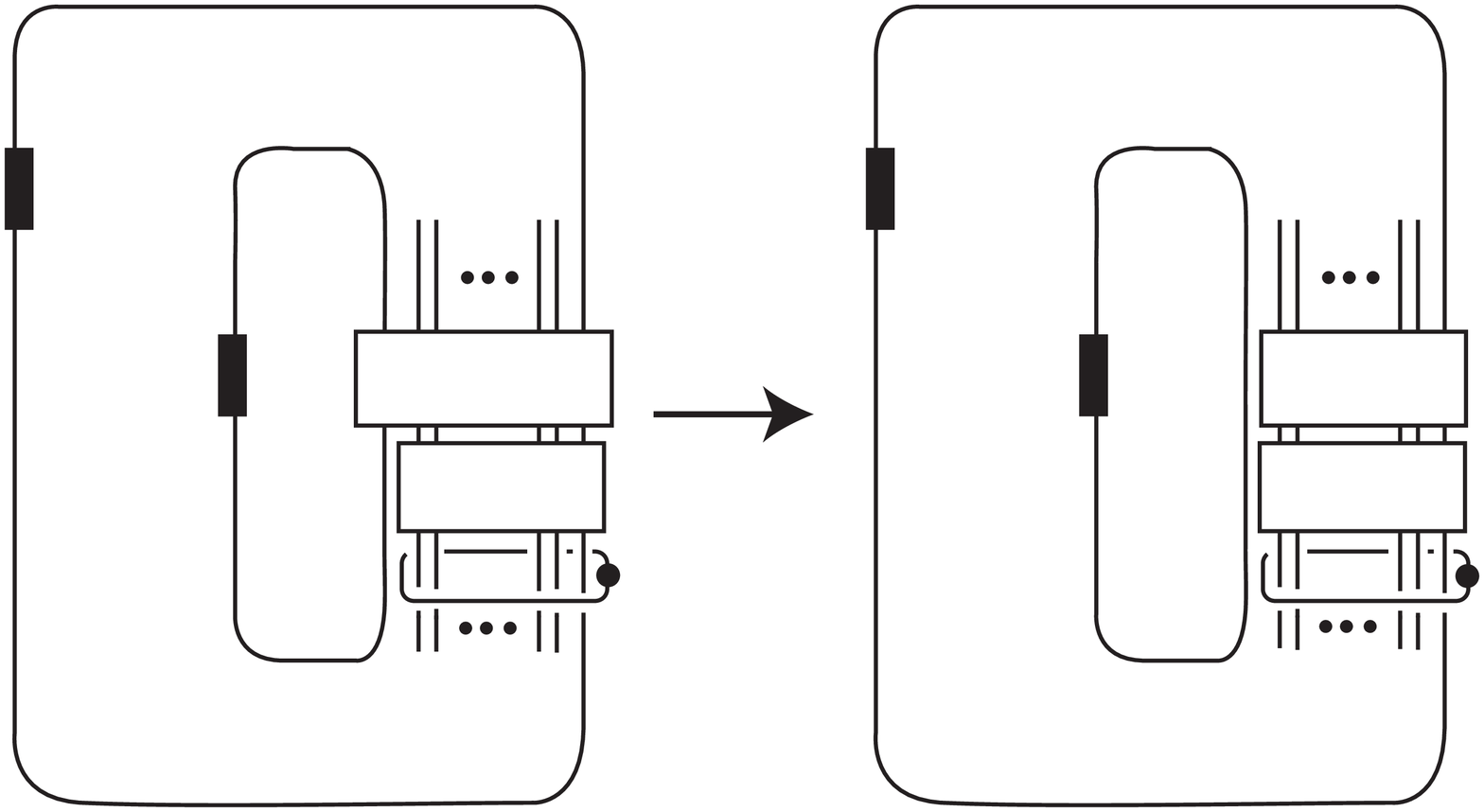}
\put(31.5,27.3){$1$}
\put(32.5,20.6){$n$}
\put(39.5,52){$n$}
\put(91,27.3){$1$}
\put(91,20.6){$n$}
\put(98,52){$n-2$}
\put(106,25){$\approx$}
\put(112,25){$B^4$}
\end{overpic}
\caption{The 4-manifold $W$ is diffeomorphic to $B^4$.}
\label{fig:Proof4-4}
\end{figure}

\begin{rem}
It would be interesting to characterize the knots 
which admit simple annulus presentations in terms of other topological properties.  It is known that a knot with unknotting number one 
admits a simple annulus presentation (see \cite[Lemma 2.2]{AJOT}). 
\end{rem}

\subsection{Proof of Theorem~\ref{thm:main}}\label{ssec:proof}

For a knot $K$, 
we denote by $\Delta_{K}(t)$ the Alexander polynomial of $K$.
We assume that $\Delta_K(1) = 1$ and $\Delta_{K}(t)$ is of the symmetric form 
\[ \Delta_{K}(t) = a_0 + \sum_{i=1}^d a_i(t^i + t^{-i}) \, . \]
We call the integer $d$ the {\em degree}\footnote{Usually, the degree is defined as $2d$. } of $\Delta_{K}(t)$, and denote it by $\deg \Delta_{K}(t)$. 
For example, $\deg (-1 + t + t^{-1}) = 1$. 

In this subsection, we define a ``good'' annulus presentation. 
Theorem~\ref{thm:main} will be shown as a typical case of the argument in this subsection. 
The following technical lemma plays an important role. 

\begin{lem}\label{lem:technical}
Let $n$ be a positive integer. 
Let $K$ be a knot with a good annulus presentation, 
and $K'$ be the knot obtained from $K$ by applying the operation $(*n)$. 
Then 
\begin{enumerate}[{\rm (i)}]
\item 
$K'$ also admits a good annulus presentation, and 
\item 
$\deg \Delta_K(t) < \deg \Delta_{K'}(t)$.
\end{enumerate}
\end{lem}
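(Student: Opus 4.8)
The plan is to treat (i) by direct inspection and (ii) by a Seifert-matrix computation, in which the precise meaning of \emph{good} is exactly what forces the leading coefficient of $\Delta_{K'}(t)$ to survive. I would first carry out the case $K = 8_{20}$ explicitly as a model, mirroring the structure of the proofs of Theorems~\ref{thm:diffeo3} and \ref{thm:diffeo4}, before extending to an arbitrary good presentation.

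For (i): by Remark~\ref{rem:simple_annulus_presentation} the knot $K'$ carries the annulus presentation drawn on the right of Figure~\ref{fig:RemOperation}, namely the presentation $(A,b_A,c)$ obtained by applying $(A)$ to $(A,b,c)$ together with the $n$ full twists inserted by $(T_n)$. Since this presentation is built from the same trivially embedded annulus $A$ and a band that still lies outside the disk $D$, it is again simple; I would then read off from the figure that the finitely many combinatorial conditions defining a good presentation (how $b$ meets $\mathrm{int}\,A$ and the framing data recorded there) are unchanged by dragging the band once around $A$ and by inserting twists in the prescribed region. Thus (i) reduces to a finite, local check on the picture.

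For (ii): First I would produce a Seifert surface $F$ for $K$ adapted to the good presentation — the orientable surface $A \cup b(I\times I)$ provides the model, once one accounts for the $(-1)$-surgery on $c$ and the ribbon intersections — and record its Seifert matrix $V$ in a basis adapted to the annulus. The operation $(*n)=(A)\circ(T_n)$ alters $F$ only inside a neighbourhood of $A\cup b$: the annulus twist $(A)$ drags the band once around $A$, and $(T_n)$ inserts $n$ right-handed full twists. Tracking this through to a Seifert surface $F'$ for $K'$, one obtains a Seifert matrix in block form
\[ V' = \begin{pmatrix} V & P \\ Q & R + nE \end{pmatrix}, \]
where $P,Q,R,E$ are fixed integral matrices read off from the good presentation and $E$ records the framing picked up under the twisting. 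Using the symmetric normalization $\Delta_{K'}(t)\doteq\det\!\left(t^{1/2}V' - t^{-1/2}(V')^{T}\right)$ and expanding the determinant along the rows and columns of the new block, the top-degree part should factor as the top-degree part of $\Delta_K(t)$ times a polynomial whose leading coefficient is governed by $R+nE$.

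The crux, and the step I expect to be the main obstacle, is to show that this expansion does not cancel in top degree, so that the degree genuinely rises rather than staying fixed or dropping. This is precisely the purpose of the good hypothesis: it pins down the leading entries of $P,Q,R,E$ so that the monomial of highest power of $t$ in the new determinant has coefficient equal, up to sign, to the product of the top coefficient of $\Delta_K(t)$ with the nonvanishing leading term of the twist factor. I would isolate this top monomial, check that its coefficient is a nonzero integer — here the hypothesis $n>0$ guarantees the relevant leading term does not accidentally vanish — and conclude $\deg\Delta_{K'}(t) = \deg\Delta_K(t)+\delta$ for a fixed $\delta>0$ determined by the good presentation, giving the strict inequality in (ii). Verifying the block form of $V'$ and the nonvanishing of this leading coefficient for a general good presentation, rather than only for $8_{20}$, is the delicate part of the argument.
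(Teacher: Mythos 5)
Your part (i) is in the right spirit but under-specified: the goodness conditions are not literally ``unchanged'' by $(*n)$. The paper's argument tracks the arcs into which $c$ is cut by the disk $D$: the operation $(A)$ preserves the number and the types of the arcs, while $(T_n)$ turns each $(++)$ (resp.\ $(--)$) arc meeting $\Sigma$ into $n+1$ arcs of the same type. This is exactly where condition (2) of Definition~\ref{def:good} (coherent signs of intersection with $\Sigma$) and the hypothesis $n>0$ enter, to guarantee that no $(+-)$ or $(-+)$ arcs are created and that condition (1) survives. A ``local check that nothing changes'' would miss the fact that the arc count grows --- which is precisely what drives part (ii).

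The real problem is part (ii). You have correctly located the crux --- showing that the top-degree coefficient does not cancel --- but your proposal does not resolve it, and the route you choose makes it hard to resolve. First, $A\cup b(I\times I)$ is not a Seifert surface: it is an immersed surface with ribbon singularities, and the knot only becomes $K$ after blowing down $c$; producing an embedded Seifert surface adapted to the presentation whose Seifert matrix has your block form $\left(\begin{smallmatrix} V & P\\ Q & R+nE\end{smallmatrix}\right)$ is itself a substantial unproved claim, since $(*n)$ replaces the band by a much more complicated one and there is no reason a Seifert surface for $K'$ should contain one for $K$ as a subsurface. Second, even granting the block form, a Seifert matrix only bounds the degree from above; the strict inequality requires the relevant leading minor to be nonzero, which you explicitly defer. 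The paper sidesteps all of this by computing $\Delta_K$ from a surgery description of the infinite cyclic cover $\tilde E(K)$, obtained from $\tilde E(U)\approx D\times\mathbb{R}$ by surgering the lifts of $c$: Lemma~\ref{lem:LiftOfArc} gives $\deg\Delta_K(t)=\#\{(++)\text{ arcs}\}+1$ with top coefficient of absolute value $1$, precisely because condition (1) pins down the single $(+-)$ and $(-+)$ arcs with linking number $\pm1$ in the cover. The inequality then follows from the elementary count $\delta'=(n+1)\delta+n\sigma>\delta$ of $(++)$ arcs before and after $(*n)$. Without either this covering-space computation or a worked-out substitute for the non-vanishing of your leading coefficient, the proof of (ii) is incomplete.
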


We will prove Lemma~\ref{lem:technical} later. 
Using Lemma~\ref{lem:technical}, we show the following 
which yields Theorem~\ref{thm:main} as an immediate corollary. 

\begin{thm}\label{thm:main2}
Let $n$ be a positive integer. 
Let $K_{0}$ be a knot with a good annulus presentation 
and $K_{i}$ $( i \ge 1)$ the knot obtained from $K_{i-1}$ by applying the operation $(*n)$. 
Then 
\begin{enumerate}
\item 
$X_{K_0}(n) \approx X_{K_1}(n) \approx X_{K_2}(n)   \approx \cdots$, and 
\item 
the knots $K_{0}, K_{1}, K_{2}, \dots$ are mutually distinct.
\end{enumerate}
Let $\overline{K}_{i}$ be the mirror image of $K_{i}$. 
Then 
\begin{enumerate}
\setcounter{enumi}{2}
\item 
$X_{\overline{K}_{0}}(-n) \approx X_{\overline{K}_{1}}(-n) \approx X_{\overline{K}_{2}}(-n) \approx \cdots$, and 
\item 
the knots $\overline{K}_{0}, \overline{K}_{1}, \overline{K}_{2}, \dots$ are mutually distinct.
\end{enumerate}
\end{thm}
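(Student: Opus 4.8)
The plan is to prove (1) and (2) directly for the positive integer $n$, and then to deduce (3) and (4) formally by passing to mirror images, so that no separate construction for negative framings is needed.

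For part (1), I would induct on $i$. Since $K_0$ carries a good annulus presentation, and since a good annulus presentation is in particular simple, Lemma~\ref{lem:technical}(i) guarantees that every $K_i$ again carries a good, hence simple, annulus presentation. Thus the hypothesis of Theorem~\ref{thm:diffeo4} is satisfied at each stage: applying it to the pair $(K_{i-1}, K_i)$—where $K_i$ is by definition the knot obtained from $K_{i-1}$ by the operation $(*n)$—yields $X_{K_{i-1}}(n) \approx X_{K_i}(n)$ for every $i \ge 1$. Composing these diffeomorphisms (transitivity of $\approx$) produces the full chain $X_{K_0}(n) \approx X_{K_1}(n) \approx \cdots$. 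For part (2), I would take the Alexander polynomial as the distinguishing invariant: Lemma~\ref{lem:technical}(ii) gives $\deg \Delta_{K_{i-1}}(t) < \deg \Delta_{K_i}(t)$ for all $i$, so the degrees $\deg \Delta_{K_0}(t) < \deg \Delta_{K_1}(t) < \cdots$ form a strictly increasing sequence. Hence no two $K_i$ share an Alexander polynomial, and as the Alexander polynomial is a knot invariant, the $K_i$ are mutually distinct.

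For parts (3) and (4), the key is the behavior of the $2$-handle attachment under mirroring. Reversing the orientation of the ambient $S^3$ carries $K_i$ to its mirror $\overline{K}_i$, negates the framing, and reverses the orientation of the attached $4$-manifold, so that $X_{\overline{K}_i}(-n)$ is diffeomorphic to $X_{K_i}(n)$ with reversed orientation. Consequently each diffeomorphism $X_{K_{i-1}}(n) \approx X_{K_i}(n)$ from part (1) transports (by composing with orientation reversal on both sides) to a diffeomorphism $X_{\overline{K}_{i-1}}(-n) \approx X_{\overline{K}_i}(-n)$, which assembles into the chain in (3). For (4), I would invoke the invariance of the symmetrically normalized Alexander polynomial under mirroring, $\Delta_{\overline{K}_i}(t) = \Delta_{K_i}(t)$, whence $\deg \Delta_{\overline{K}_i}(t)$ is again strictly increasing in $i$; exactly as in (2), this forces the $\overline{K}_i$ to be mutually distinct.

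Within this theorem the argument is essentially formal—an induction, a transitivity bookkeeping, and the orientation-reversal correspondence—so I expect the only genuinely delicate point here to be the mirror argument for negative $n$: because $(*n)$ and Lemma~\ref{lem:technical} are available only for positive $n$, the negative-framing statements cannot arise from a direct construction and must be extracted from the orientation-reversal identity above. The true difficulty of the whole development lies in Lemma~\ref{lem:technical} itself—showing that $(*n)$ preserves goodness of the annulus presentation and strictly raises the Alexander degree—but that lemma is proved separately and may be assumed here.
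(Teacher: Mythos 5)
Your proposal is correct and follows essentially the same route as the paper: goodness implies simplicity so Theorem~\ref{thm:diffeo4} chains the diffeomorphisms, Lemma~\ref{lem:technical} gives the strictly increasing Alexander degrees, and the mirror statements follow from $X_{\overline{K}_i}(-n) \approx X_{K_i}(n)$ together with $\deg \Delta_{\overline{K}_i}(t) = \deg \Delta_{K_i}(t)$. Your extra care about the orientation reversal in the mirror step is a harmless elaboration of what the paper states in one line.
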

\begin{proof}
By the definition (Definition~\ref{def:good}), 
any good annulus presentation is simple. 
Thus, by Theorem~\ref{thm:diffeo4}, we have 
\[ X_{K_0}(n) \approx X_{K_1}(n) \approx X_{K_2}(n)   \approx \cdots \, .\]
By Lemma ~\ref{lem:technical} (i),
each $K_{i}$ $( i \ge 1)$ also admits a good annulus presentation. 
Thus, by Lemma~\ref{lem:technical} (ii), we have 
\[ \deg \Delta_{K_0}(t)  < \deg \Delta_{K_1}(t)  < \deg \Delta_{K_2}(t) < \cdots \, . \]
This implies that the knots $K_{0}, K_{1}, K_{2}, \dots$ are mutually distinct. 

Since $X_{K_i}(n) \approx X_{\overline{K}_i}(-n)$ 
and $\deg \Delta_{K_i}(t)=\deg \Delta_{\overline{K}_i}(t)$, 
we have
\[ X_{\overline{K}_0}(-n) \approx X_{\overline{K}_1}(-n) \approx X_{\overline{K}_2}(-n)   \approx \cdots \, , \  \text{and}\]
\[ \deg \Delta_{\overline{K}_0}(t)  < \deg \Delta_{\overline{K}_1}(t)  < \deg \Delta_{\overline{K}_2}(t) < \cdots \, .\]
This completes the proof of Theorem~\ref{thm:main2}. 
\end{proof}

\subsubsection{Good annulus presentation and the Alexander polynomial}

Let $K$ be a knot with a simple annulus presentation $(A, b, c)$. 
Note that the knot $(\partial A \setminus b(\partial I \times I) )\cup b(I \times \partial I)$ is 
trivial\footnote{$K$ is the knot $(\partial A \setminus b(\partial I \times I) )\cup b(I \times \partial I)$ in $M_c(-1)$.} in $S^3$ 
if we ignore the $(-1)$-framed loop $c$. 
We denote by $U$ this trivial knot. 
Since $(A,b, c)$ is simple, $U \cup c$ can be isotoped so that 
$U$ bounds a ``flat'' disk $D$ (contained in $\R^2 \cup \{\infty\}$). 
This isotopy, denoted by $\varphi_b$, 
is realized by shrinking the band $b(I \times I)$. 
For simplicity, the isotopy $\varphi_b$ is also denoted by $\varphi$. 
For example, see Figure~\ref{fig:ExStandard}. 
In the abbreviated form, 
$\varphi$ is represented as in Figure~\ref{fig:StandardAbb}. 
Here we note that the linking number of $U$ and $c$ is zero 
since we assumed that $A \cup b(I \times I)$ is orientable. 
Let $\Sigma$ be the disk bounded by $c$ as in Figure~\ref{fig:StandardAbb}. 
We assume that $\Sigma$ remains fixed through the isotopy $\varphi$. 

\begin{figure}[!htb]
\centering
\begin{overpic}[width=.75\textwidth]{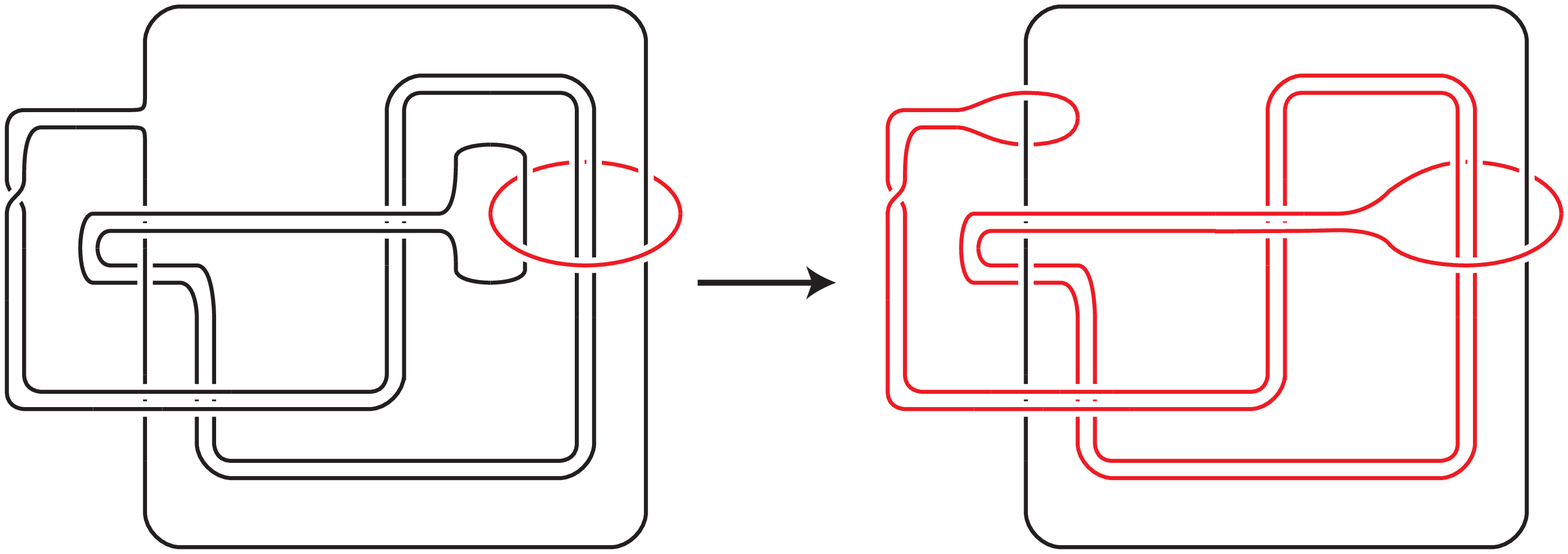}
\put(43,23.5){$\textcolor[cmyk]{0,1,1,0}{c}$}
\put(41,34){$U$}
\put(82.5,13){$\textcolor[cmyk]{0,1,1,0}{c}$}
\put(97.5,34){$U$}
\put(43.8,19){isotopy}
\put(47.5,14.5){$\varphi$}
\end{overpic}
\caption{By the isotopy $\varphi$ (shrinking the band $b(I \times I)$), 
$U \cup c$ (the left side) is changed to the right side.}
\label{fig:ExStandard}
\end{figure}
\begin{figure}[!htb]
\centering
\begin{overpic}[width=.75\textwidth]{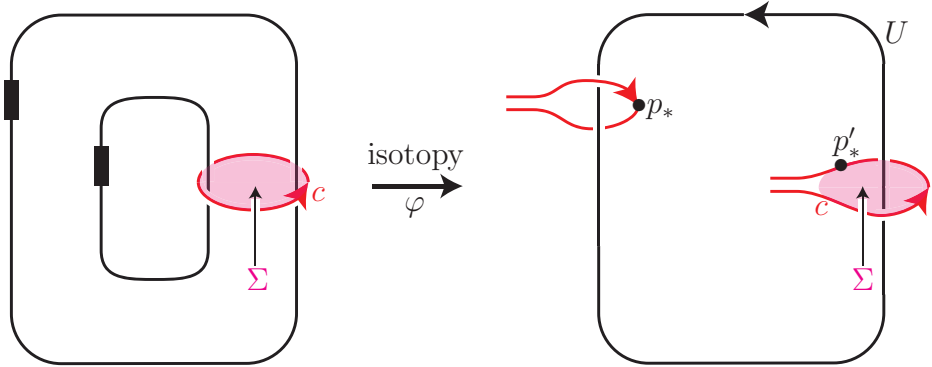}
\put(33,18){$\textcolor[cmyk]{0,1,1,0}{c}$}
\put(87,16.5){$\textcolor[cmyk]{0,1,1,0}{c}$}
\put(94.5,35){$U$}
\put(39,22){isotopy}
\put(43,17){$\varphi$}
\put(67.3,27.5){$\bullet$}
\put(89,21){$\bullet$}
\put(69,27.5){$p_*$}
\put(89,23.5){$p_*'$}
\put(26,8.5){\textcolor[cmyk]{0,1,0,0}{$\Sigma$}}
\put(91,8.5){\textcolor[cmyk]{0,1,0,0}{$\Sigma$}}
\end{overpic}
\caption{The isotopy $\varphi$ in the abbreviated form of $(A,b,c)$.
}
\label{fig:StandardAbb}
\end{figure}

After the isotopy $\varphi$, cutting along the disk $D$, 
the loop $c$ is separated into arcs whose endpoints are in $D$. 
Furthermore, choosing orientations on $c$ and $U$, these arcs are oriented. 
We choose the orientations on $c$ and $U$ as in Figure~\ref{fig:StandardAbb}, 
and orientations on $D$ and $\Sigma$ consistent with those on $c$ and $U$. 
These oriented arcs are classified into four types as follows: 
For $p \in c \cap D$, let $\sign(p) = \pm$ 
according to the sign of the intersection between $D$ and $c$ at $p$. 
For an oriented arc $\alpha$, let $p_s$ (resp.~$p_t$) 
be the starting point (resp.~terminal point) of $\alpha$. 
Then we say that $\alpha$ is of {\em type} $(\sign(p_s) \sign(p_t))$. 
That is, the oriented arc $\alpha$ is of type  $(++)$, $(--)$, $(+-)$, or $(-+)$. 
For example, see Figure~\ref{fig:ExStandard2}.

\begin{figure}[!htb]
\centering
\begin{overpic}[width=.4\textwidth]{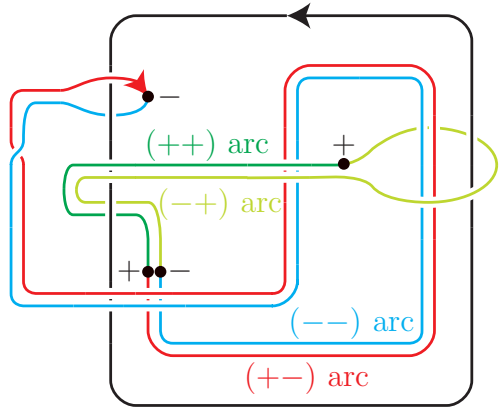}
\put(30.6,61.7){$-$}
\put(65.7,52){$+$}
\put(32.7,26.5){$-$}
\put(22.5,26.5){$+$}
\put(48,5){\textcolor[cmyk]{0,1,1,0}{$(+-)$ arc}}
\put(57,16){\textcolor[cmyk]{1,0,0,0}{$(--)$ arc}}
\put(30.5,40){\textcolor[cmyk]{.3,0,1,0}{$(-+)$ arc}}
\put(28,52){\textcolor[cmyk]{1,0,1,0}{$(++)$ arc}}
\end{overpic}
\caption{The four types of arcs. }
\label{fig:ExStandard2}
\end{figure}

Let $E(U)$ be the exterior of $U$ and $\tilde E(U)$ its infinite cyclic cover. 
Notice that $\tilde E(U)$ consists of infinitely many copies of a cylinder 
obtained from $E(U)$ by cutting along $D$. 
Thus $\tilde E(U)$ is diffeomorphic to $D \times \R \approx \cup_{i \in \Z} \left( D \times [i,i+1] \right)$. 
Each oriented arc is lifted in $\tilde E(U)$ as shown in Figure~\ref{fig:LiftOfArc}. 
Note that each arc can be knotted in Figure~\ref{fig:LiftOfArc}. 
Hereafter, for simplicity, we say an arc instead of an oriented arc.

\begin{figure}[!htb]
\bigskip
\begin{overpic}[width=.7\textwidth]{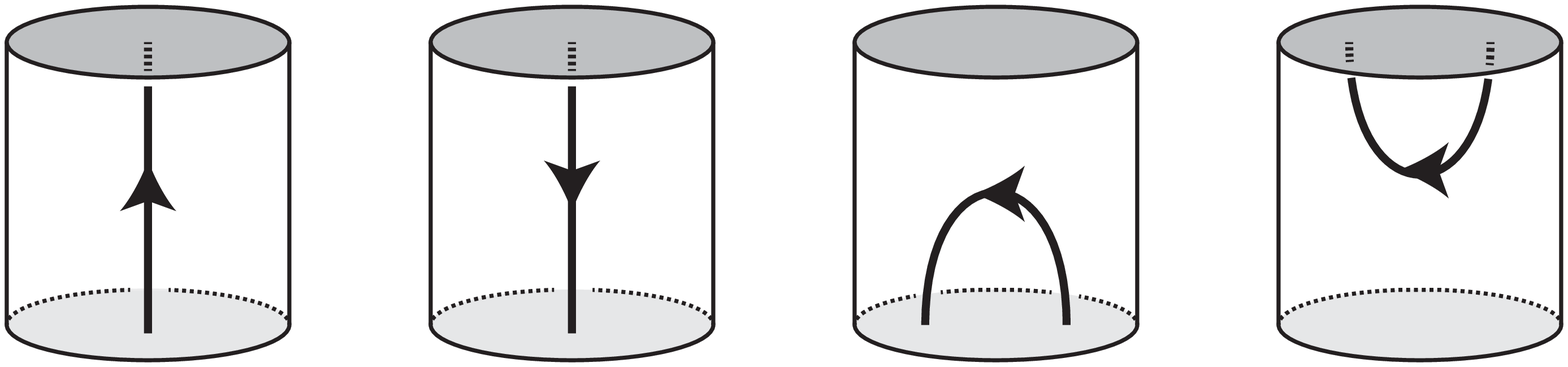}
\put(3,-3.5){$D^2 \times \{ i\}$}
\put(0,24){$D^2 \times \{ i + 1\}$}
\end{overpic}
\medskip
\caption{Lifts of oriented arcs of type $(++), (--), (+-)$, and $(-+)$ respectively. }
\label{fig:LiftOfArc}
\end{figure}
\begin{figure}[!htb]
\begin{overpic}[width=.3\textwidth]{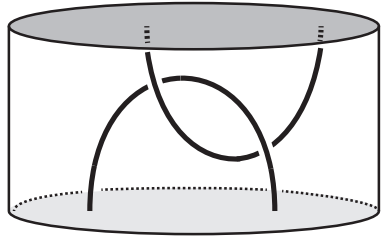}
\end{overpic}
\caption{Lifts of the arcs of type $(+-)$ and $(-+)$ from a good annulus presentation. }
\label{fig:LiftOfArc2}
\end{figure}

\begin{defn}\label{def:good}
We say that a simple annulus presentation $(A, b, c)$ is {\em good} 
if $b(I \times \partial I) \cap \mathrm{int}A \ne \emptyset$
and 
 the set of  arcs $\mathcal A$ obtained as above satisfies the following up to isotopy. 
\begin{enumerate}
\item 
$\mathcal A$ contains just one $(+-)$ arc and one $(-+)$ arc, 
and they are lifted as in Figure~\ref{fig:LiftOfArc2}, 
that is, the linking number of the arcs rel $D \times \{i, i+1\}$ is $\pm 1$. 
Here each of the two arcs is possibly itself knotted. 
\item 
For $\alpha \in \mathcal A$, 
if $\alpha \cap \mathrm{int}\Sigma \ne \emptyset$, 
then $\alpha$ is of type $(++)$ (resp.~$(--)$) 
and the sign of each intersection point in 
$\mathrm{int}\Sigma \cap \alpha$ is $+$ (resp.~$-$). 
 
\end{enumerate}
\end{defn}

\begin{rem}\label{rem:good}
For a simple annulus presentation $(A,b,c)$, after the isotopy $\varphi$, 
the intersection $c \cap D$ corresponds to 
the intersection $b(I \times \partial I) \cap \mathrm{int} A$ 
and further two points $p_*$ and $p_*'$ depicted in Figure~\ref{fig:StandardAbb}. 
Notice that 
\[ b(I \times \partial I) \cap \mathrm{int} A = \sqcup_i \, b(\{t_i\} \times \partial I) \]  
for some $0 < t_1 < \dots < t_r < 1$. 
For each $i$, $b(\{t_i\} \times \partial I)$ consists of two points whose signs differ. 
Furthermore, with the orientation as in Figure~\ref{fig:StandardAbb}, we have 
\[ \sign(p_*) = - \, \text{ \, and \, } \sign(p_*') = + \, . \] 
\end{rem}

\begin{ex}\label{ex:GoodIsotopy}
The annulus presentation $(A, b, c)$ of the knot $J_0 = 8_{20}$ is good since 
it is changed by the isotopy $\varphi = \varphi_b$ as in Figure~\ref{fig:ExStandard4}. 
Applying the operation $(A)$, 
we obtain the annulus presentation as in Figure~\ref{fig:ExStandard}. 
We denote this annulus presentation by $(A, b_A, c)$. 
Now we check that $(A, b_A, c)$ is good. 
It is obvious that $b_A(I \times \partial I) \cap \mathrm{int}A \ne \emptyset$. 
As in the left side of Figure~\ref{fig:ExStandard3}, 
after the isotopy $\varphi = \varphi_{b_A}$, 
the set of arcs satisfies condition (1) of Definition~\ref{def:good}. 
However the $(+-)$ arc intersects $\mathrm{int} \Sigma$, 
that is, condition (2) of Definition~\ref{def:good} does not hold. 
In such a case, changing the position of an intersection 
as in Figure~\ref{fig:ExStandard3} by applying an isotopy to the $(+-)$ arc, 
we obtain the set of arcs satisfying condition (2). 
Note that after this isotopy we can assume that 
the $(+-)$ and $(-+)$ arcs are fixed by a subsequent application of the operation $(A)$. 
\begin{figure}[!htb]
\centering
\begin{overpic}[width=.8\textwidth]{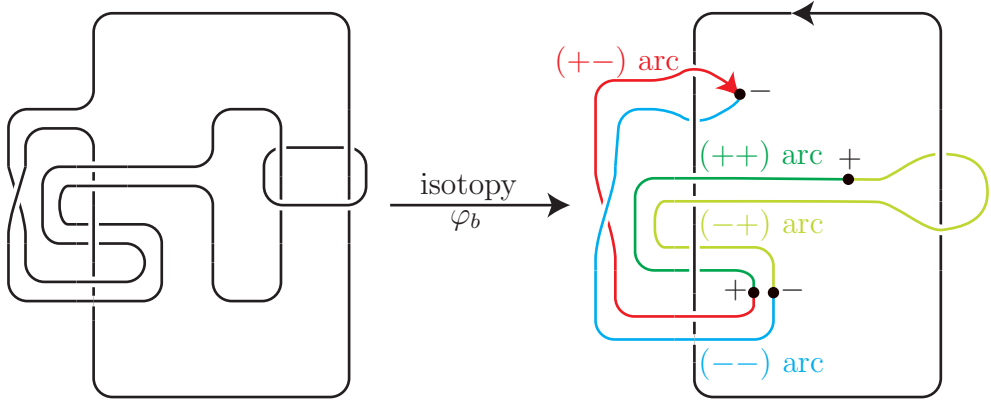}
\put(75,30.5){$-$}
\put(78.2,10.5){$-$}
\put(72.5,10.5){$+$}
\put(84,23.7){$+$}
\put(55.5,33.5){\textcolor[cmyk]{0,1,1,0}{$(+-)$ arc}}
\put(70,3){\textcolor[cmyk]{1,0,0,0}{$(--)$ arc}}
\put(70,17){\textcolor[cmyk]{.3,0,1,0}{$(-+)$ arc}}
\put(70,24){\textcolor[cmyk]{1,0,1,0}{$(++)$ arc}}
\put(42,21){isotopy}
\put(45,18){$\varphi_b$}
\end{overpic}
\caption{The annulus presentation $(A, b, c)$ of $J_0 = 8_{20}$ is good.}
\label{fig:ExStandard4}
\end{figure}
\begin{figure}[!htb]
\centering
\begin{overpic}[width=.9\textwidth]{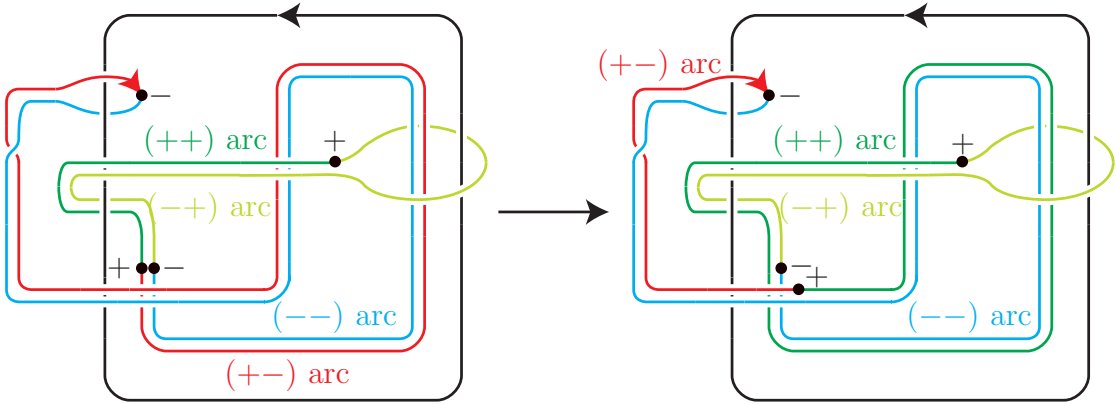}
\put(13.2,27){$-$}
\put(28.8,23){$+$}
\put(14.4,11.6){$-$}
\put(9.5,11.6){$+$}
\put(20,2){\textcolor[cmyk]{0,1,1,0}{$(+-)$ arc}}
\put(24.2,7.2){\textcolor[cmyk]{1,0,0,0}{$(--)$ arc}}
\put(13,17){\textcolor[cmyk]{.3,0,1,0}{$(-+)$ arc}}
\put(12.7,23){\textcolor[cmyk]{1,0,1,0}{$(++)$ arc}}
\put(69.5,27){$-$}
\put(85,22.7){$+$}
\put(70.5,11.8){$-$}
\put(71.9,10.8){$+$}
\put(53.3,29.8){\textcolor[cmyk]{0,1,1,0}{$(+-)$ arc}}
\put(81,7.2){\textcolor[cmyk]{1,0,0,0}{$(--)$ arc}}
\put(69.5,17){\textcolor[cmyk]{.3,0,1,0}{$(-+)$ arc}}
\put(69,23){\textcolor[cmyk]{1,0,1,0}{$(++)$ arc}}
\end{overpic}
\caption{By an isotopy, we move the intersection point of $c \cap D$. }
\label{fig:ExStandard3}
\end{figure}
\end{ex}

Let $E(K)$ be the exterior of a knot $K$. 
Considering a surgery description of the infinite cyclic covering, $\tilde E(K)$, 
of $E(K)$, we have the following. 

\begin{lem}\label{lem:LiftOfArc} 
If a knot $K$ admits a good annulus presentation, then 
\begin{align}\label{eq:ArcDeg}
\deg \Delta_K(t) = \# \{ \text{arcs of type } (++) \} + 1 \, .
\end{align}
\end{lem}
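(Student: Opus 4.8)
The plan is to read the Alexander polynomial off an equivariant surgery description of the infinite cyclic cover $\tilde E(K)$, exactly as suggested before the statement, and then match its degree against the arc data from Definition~\ref{def:good}. Since $K\subset M_c(-1)\approx S^3$ is isotopic to the trivial knot $U$ once the framed loop $c$ is forgotten, the exterior $E(K)$ is obtained from the solid torus $E(U)$ by $(-1)$-surgery on $c$. Because $\operatorname{lk}(U,c)=0$, the loop $c$ is null-homologous in $E(U)$, so in $\tilde E(U)\approx D^2\times\R$ it lifts to a $\Z$-family of disjoint copies $\tilde c_m=t^m\tilde c_0$ ($m\in\Z$), and $\tilde E(K)$ is obtained from $\tilde E(U)$ by performing $(-1)$-surgery on every $\tilde c_m$.

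First I would compute $H_1(\tilde E(K))$ as a $\Z[t^{\pm1}]$-module. Set $X=\tilde E(U)\setminus\nbhd\!\big(\bigcup_m\tilde c_m\big)$. As $\tilde E(U)$ is contractible, a Mayer--Vietoris argument shows $H_1(X)$ is free abelian on the meridians $\mu_m$, i.e. $H_1(X)\cong\Z[t^{\pm1}]$ with $\mu_m=t^m\mu_0$. Writing $\ell_m=\operatorname{lk}(\tilde c_0,\tilde c_m)$, the $0$-framed longitude of $\tilde c_j$ satisfies $[\lambda_j]=\sum_{k\ne j}\ell_{k-j}\mu_k$ in $H_1(X)$; and since $\operatorname{lk}(U,c)=0$ the $S^3$-Seifert framing lifts, so the $(-1)$-slope is $-\mu_j+\lambda_j$ and filling imposes $\mu_j=\lambda_j$. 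By $t$-equivariance all these relations coincide with $\big(1-\sum_{m\ne0}\ell_m t^m\big)\mu_0=0$, whence $H_1(\tilde E(K))\cong\Z[t^{\pm1}]/\big(1-\sum_{m\ne0}\ell_m t^m\big)$. Therefore $\Delta_K(t)\doteq 1-\sum_{m\ne0}\ell_m t^m$, and since $\ell_m=\ell_{-m}$ one gets $\deg\Delta_K(t)=\max\{m:\ell_m\ne0\}$.

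It remains to identify this maximum with $\#\{(++)\text{ arcs}\}+1$ via the slab picture of Figure~\ref{fig:LiftOfArc}. Each arc lies in a single slab $D^2\times[i,i+1]$ (it cannot recross $D$ without creating a new intersection point), and its type dictates the slab: a $(++)$ (resp. $(--)$) arc runs monotonically up (resp. down) exactly one slab, while the $(+-)$ and $(-+)$ arcs remain within one slab. Condition (1) of Definition~\ref{def:good} forces the cyclic sign sequence of $c\cap D$ to be a single block of $+$'s (say $p$ of them) followed by a single block of $p$ minus-signs, so $\#\{(++)\}=p-1$; tracking the crossing levels then shows that $\tilde c_0$ occupies precisely the $p+1$ consecutive slabs running from the one containing its $(-+)$ arc up to the one containing its $(+-)$ arc. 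Consequently $\ell_m=0$ for $m>p$ by disjointness of slab supports, whereas for $m=p$ the curves $\tilde c_0$ and $\tilde c_p$ overlap in the single top slab, where the $(+-)$ arc of $\tilde c_0$ meets the $(-+)$ arc of $\tilde c_p$; by Condition (1) (see Figure~\ref{fig:LiftOfArc2}) their relative linking is $\pm1$, so $\ell_p=\pm1\ne0$. Hence $\deg\Delta_K(t)=p=\#\{(++)\}+1$.

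The routine part is the homological identification $\Delta_K\doteq 1-\sum_{m\ne0}\ell_m t^m$; the delicate part, and the main obstacle, is the geometric bookkeeping of the third step: correctly reading each arc type as a slab occupancy, getting the slopes and signs right so that the $(-1)$-framing becomes the relation $\mu_j=\lambda_j$, and verifying that at the extreme shift $m=p$ the \emph{only} surviving interaction is between the unique $(+-)$ and $(-+)$ arcs (so that the top coefficient cannot accidentally vanish). Note that only this top linking number is needed, so Condition (2) of Definition~\ref{def:good} plays no role here and is reserved for the companion statements.
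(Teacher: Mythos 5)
Your argument is essentially the paper's: both read $\Delta_K(t)$ off the surgery description of the infinite cyclic cover $\tilde E(K)$ obtained by $(-1)$-filling the lifts of $c$ in $\tilde E(U)\approx D^2\times\R$, identify the nonconstant coefficients with the linking numbers $\operatorname{lk}(\tilde c_0,\tilde c_m)$, and use condition (1) (one $(+-)$ and one $(-+)$ arc linking $\pm1$ rel the boundary disks) to show the top coefficient sits in degree $\#\{(++)\}+1$ and is $\pm1$. The only quibble is your constant term: the lift of the $S^3$-Seifert framing of $c$ has self-linking $-\sum_{m\ne0}\ell_m$ rather than $0$ upstairs, so the relation is $\bigl(1+\sum_{m\ne0}\ell_m(1-t^m)\bigr)\mu_0=0$ rather than $\bigl(1-\sum_{m\ne0}\ell_m t^m\bigr)\mu_0=0$; this matches the paper's normalization $a_0=1-2\sum a_i$ and does not affect the degree computation.
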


\begin{proof}[Proof] 
Let $(A, b, c)$ be a good annulus presentation of $K$.
Then $K$ is represented by the unknot $U$ (in $M_{c}(-1)$).
After the isotopy $\varphi_b$, 
$U$ bounds a ``flat'' disk $D$ (contained in $\R^2 \cup \{\infty\}$)
as in Figure~\ref{fig:ExStandard}. 

Let $V$ be a closed tubular neighborhood of $c$ 
(which is diffeomorphic to a solid torus) 
and $J$ a simple closed curve of slope $-1$ on $\partial V$. 
Since $c$ and $U$ have linking number zero, 
we can construct the infinite cyclic cover $\tilde{E}(K)$ 
from $\tilde{E}(U) \approx \cup_{i \in \Z} \left( D \times [i,i+1] \right)$ as follows: 
Remove the interior of solid tori $\tilde{V}_{i}$ ($i \in \Z$) 
which lie above $V$ in $\tilde{E}(U)$, 
and sew solid tori $W_i$ ($i \in \Z$) back so that 
each meridian, $\mu_i$, of $W_i$ is attached to the lift, $\tilde{J}_i$, of $J$. 

Let $\Delta_{K}(t) = a_0 + \sum_{i=1}^{\infty} a_i(t^i + t^{-i})$. 
Then it is not difficult to see that  $a_i$ $(i=1,2, \dots)$ is 
the linking number between $\tilde{J}_{0}$ and the core of $\tilde{V}_{i}$
with suitable linking convention in $\tilde{E}(U)$. 
Note that $a_0 = 1 - 2 \sum_{i=1}^{\infty} a_i$ since $\Delta_K(1) = 1$. 
By condition (1) in Definition~\ref{def:good}, 
\[ 
|a_i| = \begin{cases}
1 & \text{if } i=\# \{ \text{arcs of type } (++) \} + 1 \, ,\\
0 & \text{if } i > \# \{ \text{arcs of type } (++) \} + 1 \, .
\end{cases}
\]
This implies that 
$\deg \Delta_{K}(t)=\# \{ \text{arcs of type } (++) \} + 1$.

For the details of a surgery description of $\tilde{E}(K)$ and the Alexander polynomial, 
we refer the reader to Rolfsen's book~\cite[Chapter 7]{Rolfsen}. 
\end{proof}

\begin{rem}
To show Lemma~\ref{lem:LiftOfArc}, 
we do not need conditions (2) and (3) in Definition~\ref{def:good}. 
These conditions are used to prove Lemma~\ref{lem:technical}. 
\end{rem}

\begin{rem}\label{rem:monic}
If a knot $K$ admits a good annulus presentation, 
then we can see that $\Delta_K(t)$ is monic. 
\end{rem}

Now we are ready to prove the main result in this section. 

\begin{proof}[Proof of Theorem~\ref{thm:main}]
The case where $n=0$ was proved in \cite{AJOT}. 
We can check that the simple annulus presentation of the knot $8_{20}$ 
as given in Figure~\ref{fig:ExStandard4} is good, see Example~\ref{ex:GoodIsotopy}. 
Thus the proof for the case where $n \neq 0$ is obtained by Theorem~\ref{thm:main2} immediately. 
\end{proof}

\subsubsection{Proof of Lemma~\ref{lem:technical}}

We start the proof of Lemma~\ref{lem:technical}. 
Let $(A, b, c)$ be a good annulus presentation of a knot $K$. 
Recall that the operation $(*n)$ is a composition of the two operations $(A)$ and $(T_n)$ 
for an annulus presentation. 
Let $(A, b_A, c)$ be the annulus presentation obtained from $(A, b, c)$ 
by applying the operation $(A)$, 
and $(A, b', c)$ the annulus presentation obtained from $(A, b_A, c)$ 
by applying the operation $(T_n)$. 
That is,
\[ (A, b, c) \overset{(A)}{\longrightarrow }  (A, b_A, c)  \overset{(T_n)}{\longrightarrow } (A, b', c). \]
Note that $K'$ admits the annulus presentation $(A, b', c)$.

First we show that $(A, b_A, c)$ is good. 
It is obvious that $b_A(I \times \partial I) \cap \mathrm{int}A \ne \emptyset$.
The operation $(A)$ preserves the number of arcs and type of each arc. 
We can suppose that the $(+-)$ arc and the $(-+)$ arc are fixed by the operation $(A)$ up to isotopy as discussed in Example~\ref{ex:GoodIsotopy}. 
Therefore the set of the arcs $\mathcal A$ (obtained from $(A, b_A, c)$) 
satisfies condition (1). 
Furthermore we can show that $\mathcal A$ satisfies condition (2) 
since the orientations of $c$ and $U$ are consistent with the operation $(A)$. 
Therefore $(A, b_A, c)$ is good. 

Next we show that $(A, b', c)$ is good. 
It is obvious that $b'(I \times \partial I) \cap \mathrm{int}A \ne \emptyset$.
The operation $(T_n)$ may increase the number of arcs. 
Indeed a $(++)$ (resp.~$(--)$) arc through $\Sigma$ is changed to $n+1$ $(++)$ (resp.~$(--)$) arcs since $(A, b_A, c)$ is good and $n > 0$, 
in particular, a $(++)$ arc (resp.~$(--)$ arc) intersects $\Sigma$ 
positively (resp.~negatively). 
Note that the $(+-)$ arc and the $(-+)$ arc are fixed by the operation $(T_n)$ 
since they are disjoint from $\Sigma$. 
Hence $(+-)$ arcs and $(-+)$ arcs are not produced by the operation $(T_n)$. 
Therefore the set of the arcs $\mathcal A'$ (obtained from $(A, b', c)$) 
 satisfies conditions (1) and (2). 
Hence $(A, b',c)$ (of $K'$) is good. 
This completes the proof of the claim (i) of Lemma~\ref{lem:technical}. 

Let $ \delta = \# \left(A \cap b(I \times \partial I) \right) / 2$ and 
$\sigma = \# \left( \Sigma \cap b(I \times \partial I) \right) / 2$. 
Then we see that 
\[ \# (A \cap b_A (I \times \partial I)) /2 = \delta \, , \qquad 
\# (\Sigma \cap b_A(I \times \partial I)) /2 = \sigma + \delta \, . \]
Then we have 
\begin{align*}
\# (A \cap b'(I \times \partial I)) /2 
& = \# (A \cap b_A(I \times \partial I))/2 + n \cdot \# (\Sigma \cap b_A(I \times \partial I)) /2 \\ 
&= (n+1) \delta + n \sigma \, , 
\end{align*}
and 
\begin{align*}
\# (\Sigma \cap b'(I \times \partial I)) 
&= \# (\Sigma \cap b_A(I \times \partial I)) \, .
\end{align*}
These are equivalent to 
\begin{align*}
\begin{pmatrix}
\delta' \\ 
\sigma'
\end{pmatrix}
= 
\begin{pmatrix}
n+1 & n \\ 
1 & 1 
\end{pmatrix}
\begin{pmatrix}
\delta \\ 
\sigma
\end{pmatrix},  
\end{align*}
where $ \delta' = \# \left(A \cap b'(I \times \partial I) \right) / 2$ and 
$\sigma' = \# \left( \Sigma \cap b'(I \times \partial I) \right) / 2$. 
Since $n \ge 1$ and $\delta \ge 1$, we have 
\begin{align}\label{eq:AlphaIneq}
\delta < \delta' \, . 
\end{align}
By the condition that $(A, b, c)$ and $(A, b',c)$ is good, 
and by Remark~\ref{rem:good}, we see that 
\[ \delta = \# \set{(++)\text{ arcs of }(A, b, c)} \, , \qquad 
\delta' = \# \set{(++)\text{ arcs of }(A, b', c)} \, . \] 
Therefore, by Lemma~\ref{lem:LiftOfArc}, we have 
\begin{align}\label{eq:DegSing}
\deg \Delta_K = \delta + 1 \, ,  \quad \deg \Delta_{K'} = \delta' + 1 \, . 
\end{align}
By \eqref{eq:AlphaIneq} and \eqref{eq:DegSing}, we have 
$\deg \Delta_{K}(t) < \deg \Delta_{K'}(t)$. 
This completes the proof of the claim (ii) of Lemma~\ref{lem:technical}, 
and thus, the proof of Lemma~\ref{lem:technical}.

\providecommand{\bysame}{\leavevmode\hbox to3em{\hrulefill}\thinspace}
\providecommand{\MR}{\relax\ifhmode\unskip\space\fi MR }
\providecommand{\MRhref}[2]{%
  \href{http://www.ams.org/mathscinet-getitem?mr=#1}{#2}
}
\providecommand{\href}[2]{#2}

\end{document}